\documentclass[11pt]{amsart}

\usepackage[T1]{fontenc}
\usepackage[utf8]{inputenc}
\usepackage[english]{babel}
\usepackage{lmodern}
\usepackage[margin=1in]{geometry}
\usepackage{amsmath,amssymb,amsfonts,amsthm,mathtools}
\usepackage{mathrsfs}
\usepackage{enumitem}
\usepackage{hyperref}

\hypersetup{
  colorlinks=true,
  linkcolor=blue,
  citecolor=blue,
  urlcolor=blue
}

\numberwithin{equation}{section}

\newtheorem{theorem}{Theorem}[section]
\newtheorem{proposition}[theorem]{Proposition}
\newtheorem{lemma}[theorem]{Lemma}
\newtheorem{corollary}[theorem]{Corollary}

\theoremstyle{definition}
\newtheorem{definition}[theorem]{Definition}

\newcommand{\R}{\mathbb{R}}
\newcommand{\Z}{\mathbb{Z}}
\newcommand{\N}{\mathbb{N}}
\newcommand{\cK}{\mathcal{K}}
\newcommand{\cP}{\mathcal{P}}
\newcommand{\cG}{\mathcal{G}}
\newcommand{\eps}{\varepsilon}
\newcommand{\weak}{\rightharpoonup}
\newcommand{\crit}{\operatorname{crit}}

\newcommand{\vertiii}[1]{{\left\vert\kern-0.25ex\left\vert\kern-0.25ex\left\vert #1 \right\vert\kern-0.25ex\right\vert\kern-0.25ex\right\vert}}

\title[Strongly indefinite Schrödinger equations]
{Ground states for strongly indefinite Schr\"{o}dinger equations with competing nonlinearities}

\author[B. Bieganowski]{Bartosz Bieganowski}

\address[B. Bieganowski]{\newline\indent
	Faculty of Mathematics, Informatics and Mechanics, \newline\indent
	University of Warsaw, \newline\indent
	ul. Banacha 2, 02-097 Warsaw, Poland}
\email{\href{mailto:bartoszb@mimuw.edu.pl}{bartoszb@mimuw.edu.pl}}

\date{}

\begin{document}

\begin{abstract}
We survey recent variational methods for strongly indefinite Schrödinger
equations with sign-changing nonlinearities. The main object is an energy
functional of the form
\[
J(u)=\frac12\|u^+\|^2-\frac12\|u^-\|^2
-\int_{\R^N}F(u)\,dx+\lambda\int_{\R^N}G(u)\,dx,
\]
where the splitting $X=X^+\oplus X^-$ is induced by a spectral gap of the
linear Schrödinger operator, and where the nonlinear part
\[
I(u)=\int_{\R^N}F(u)\,dx-\lambda\int_{\R^N}G(u)\,dx
\]
is allowed to change sign. We discuss the generalized linking theorem
developed for such functionals, and the abstract multiplicity theory for critical orbits in dislocation spaces. In the final part, we prove a new ground state result for the competing pure-power case
\[
f(u)=|u|^{p-2}u,\qquad g(u)=|u|^{q-2}u,\qquad 2<q<p<2^*.
\]
More precisely, for $\lambda>0$ sufficiently small, the corresponding
strongly indefinite functional possesses a nontrivial critical point of
least energy among all nontrivial critical points. 

\medskip

\noindent \textbf{Keywords:} variational methods, nonlinear Schr\"odinger equations, strongly indefinite problems, sign-changing nonlinearities, ground state solutions
   
\noindent \textbf{AMS Subject Classification:} 35J20, 35Q55, 58E05 

\end{abstract}

\maketitle

\section{Introduction}

We consider the nonlinear stationary Schr\"odinger equation
\begin{equation}\label{eq:model-basic}
-\Delta u + V(x)u = f(u)-\lambda g(u),
\qquad x\in \mathbb{R}^N,
\end{equation}
where $N \geq 3$, $\lambda\geq 0$, $V:\mathbb{R}^N\to\mathbb{R}$ is a periodic
potential, and $f,g:\mathbb{R}\to\mathbb{R}$ are nonlinearities of
subcritical growth.

Equations of the form \eqref{eq:model-basic} arise naturally in nonlinear
optics (\cite{Kuchment, Pankov}). Indeed, consider the time-dependent nonlinear Schr\"odinger
equation
\begin{equation}
\mathrm{i}\partial_t\Psi
=
-\Delta\Psi+V_0(x)\Psi-h(|\Psi|)\Psi,
\qquad (t,x)\in \mathbb{R}\times\mathbb{R}^N,
\end{equation}
where the nonlinear term $h$ describes the nonlinear polarization of the medium. Searching for standing
waves of the form
\[
\Psi(t,x)=u(x)e^{-\mathrm{i}\omega t},
\]
we obtain
\[
-\Delta u+\bigl(V_0(x)-\omega\bigr)u=h(|u|)u.
\]
Thus, after incorporating the frequency shift $\omega$ into the potential,
one is led to an equation of the form \eqref{eq:model-basic}. When the
potential is periodic, localized standing waves are usually referred to as
\emph{gap solitons} (\cite{Buryak}).

The main purpose of this paper is to survey recent variational results for
strongly indefinite Schr\"odinger equations with competing nonlinearities and to provide a new ground-state result.
More specifically, we summarize the results obtained in
\cite{BB2022,BBS2025}, where an existence theory and a multiplicity theory
were developed for strongly indefinite functionals whose nonlinear part may
change sign. The new contribution concerns the existence of a ground state in the
pure-power case. More precisely, for sufficiently small $\lambda>0$, we
prove the existence of a least-energy nontrivial solution for the model
problem
\begin{equation}\label{eq:pure-power-problem}
-\Delta u+V(x)u
=
|u|^{p-2}u-\lambda |u|^{q-2}u,
\qquad x\in\mathbb{R}^N,
\end{equation}
where
\[
2<q<p<2^*,
\]
with
\[
2^*=
\dfrac{2N}{N-2}.
\]
Here the term $|u|^{p-2}u$ is focusing, whereas
$-\lambda |u|^{q-2}u$ is defocusing. From the physical point of view,
this corresponds to the competition of two different mechanisms of
nonlinear polarization (\cite{Slusher}).

Throughout the paper, we assume that $V$ is $\mathbb{Z}^N$-periodic and
that $0$ lies in a spectral gap of the self-adjoint Schr\"odinger operator
\[
L:=-\Delta+V
\]
on $L^2(\mathbb{R}^N)$, see \cite{ReedSimon} for the full description of the spectrum of the periodic Schr\"odinger operator. Consequently, the natural energy space admits an
orthogonal spectral decomposition
\[
X=X^+\oplus X^-,
\]
where $X^+$ and $X^-$ are the positive and negative spectral subspaces
associated with $L$, respectively. We are interested in the genuinely
strongly indefinite case in which both $X^+$ and $X^-$ are
infinite-dimensional.

Writing $u=u^++u^-$, where $u^\pm\in X^\pm$, the corresponding energy
functional takes the form
\begin{equation}
J(u)
=
\frac{1}{2}\|u^+\|^2
-
\frac{1}{2}\|u^-\|^2
-
I(u),
\end{equation}
where
\[
I(u)
=
\int_{\mathbb{R}^N}F(u)\,dx
-
\lambda\int_{\mathbb{R}^N}G(u)\,dx,
\]
and
\[
F(t):=\int_0^t f(s)\,ds,
\qquad
G(t):=\int_0^t g(s)\,ds.
\]
Since the quadratic part of $J$ is positive on $X^+$ and negative on
$X^-$, the functional is unbounded both from above and from below on
infinite-dimensional subspaces. Hence neither direct minimization nor the
classical mountain-pass theorem \cite{AmbrosettiRabinowitz} can be applied directly.

Among the standard variational tools for strongly indefinite problems are
the generalized linking theorem of Kryszewski and Szulkin \cite{KS}, the
Nehari--Pankov method \cite{Pankov}, and the approach of Szulkin and Weth
\cite{SW}. In many classical applications, the nonlinear part $I$ is
assumed to be nonnegative. This assumption is technically convenient, it
yields suitable upper semicontinuity properties with respect to
product-type weak topologies and facilitates the comparison of minimax
levels with minimization levels on the Nehari--Pankov manifold. In this case, the existence of ground states and multiplicity of solutions have been studied by many authors; see, for example, \cite{Li,Liu,Mederski2016,Rabinowitz,dePaiva} and the references therein.

The situation studied here is more delicate. Although $F\geq 0$ and
$G\geq 0$, their difference may change sign. For the pure-power problem
\eqref{eq:pure-power-problem}, one has
\[
F(t)-\lambda G(t)
=
\frac{|t|^p}{p}
-
\lambda\frac{|t|^q}{q},
\]
which is negative for sufficiently small nonzero $|t|$ and positive for
large $|t|$. Therefore, the usual strongly indefinite variational
machinery cannot be applied without substantial modifications.

In \cite{BB2022}, Bernini and Bieganowski established a generalized
linking theorem for strongly indefinite functionals with a sign-changing
nonlinear part. In particular, they obtained a nontrivial critical point
for sufficiently small $\lambda>0$. Subsequently, in \cite{BBS2025},
Bernini, Bieganowski, and Strzelecki developed an abstract multiplicity
theory for critical orbits in this setting, based on dislocation spaces and
discrete group actions.

The new ground-state result proved in the final part of this paper is not
an immediate consequence of the existence theorem from \cite{BB2022}. The
critical point obtained there is produced by a linking argument and is not
identified as a least-energy solution. Moreover, the direct comparison with
the Nehari--Pankov manifold is available only in the limiting case
$\lambda=0$.

For the pure-power problem, we show that this difficulty can be overcome by adapting the minimization-on-critical-points strategy of Jeanjean and Tanaka \cite{JeanjeanTanaka}.
The proof combines a positivity estimate for bounded Palais--Smale
sequences whose norm stays away from zero with a
concentration--compactness argument and a Brezis--Lieb type splitting
procedure. This yields compactness modulo lattice translations and allows
us to identify a least-energy nontrivial critical point of $J$.

We impose the following assumption on the potential $V$.

\begin{enumerate} [label=(V),ref=V]
\item \label{ass:V}
The potential $V\in L^\infty(\R^N)$ is  $\Z^{N}$-periodic,
\[
0\notin\sigma\left(-\Delta+V(x)\right),
\qquad
\sigma\left(-\Delta+V(x)\right)\cap(-\infty,0)\ne\emptyset.
\]
\end{enumerate}

The natural energy space is
\[
X:=H^1(\R^N).
\]
Throughout, $|\cdot|_r$ denotes the norm of $L^r(\R^N)$, and
$a\lesssim b$ means that $a\le Cb$ for a positive constant $C$ independent
of the quantities under consideration. Under (\ref{ass:V}), the quadratic form
\[
Q(u)=\int_{\R^N}\left(|\nabla u|^2+V(x)|u|^2\right)\,dx
\]
has a spectral decomposition into positive and negative parts. More precisely, one
obtains an orthogonal splitting
\[
X=X^+\oplus X^-
\]
such that $Q$ is positive (respectively, negative) definite on $X^+$ (respectively, $X^-$). For $u^\pm\in X^\pm$ we define
\[
\|u^\pm\|^2
=
\pm\int_{\R^N}\left(|\nabla u^\pm|^2
+V(x)|u^\pm|^2\right)\,dx,
\]
and
\[
\|u\|^2=\|u^+\|^2+\|u^-\|^2.
\]
We denote by $\langle\cdot,\cdot\rangle$ the scalar product associated
with this norm. The presence of the spectral gap around zero implies the existence of a constant $\mu_0>0$ such that
\begin{equation}\label{eq:mu0}
\mu_0|u|_2^2\le \|u\|^2,\qquad u\in X.
\end{equation}
Moreover, the projections $u\mapsto u^\pm$ are continuous in
$L^s(\R^N)$ for every $2\le s<2^*$, see \cite[Proposition 7]{Troestler}.

The nonlinearities considered in \cite{BB2022,BBS2025} satisfy the
following hypotheses. There are exponents
\[
2<q<p<2^*
\]
such that:

\begin{enumerate}[label=(F\arabic*)]
\item $f:\R\to\R$ is odd, continuous, and
\[
|f(u)|\lesssim 1+|u|^{p-1}, \quad u \in \R;
\]
\item $f(u)=o(|u|)$ as $u\to0$;
\item
$
\frac{F(u)}{|u|^q}\to+\infty
\quad\text{as } |u|\to\infty, \mbox{ where } F(u)=\int_0^u f(s)\,ds, \mbox{ and } 
F(u)\ge0;
$
\item the map
\[
u\mapsto \frac{f(u)}{|u|^{q-1}}
\]
is nondecreasing on $(-\infty,0)$ and on $(0,\infty)$;
\item there exists $\rho>0$ such that for $|u|\ge\rho$,
\[
|u|^{p-1}\lesssim |f(u)|\lesssim |u|^{p-1}.
\]
\end{enumerate}

For $g$ one assumes:
\begin{enumerate}[label=(G\arabic*)]
\item $g:\R\to\R$ is odd, continuous, and 
\[
|g(u)|\lesssim 1+|u|^{q-1}, \quad u \in \R;
\]
\item $g(u)=o(|u|)$ as $u\to0$;
\item the map
\[
u\mapsto \frac{g(u)}{|u|^{q-1}}
\]
is nonincreasing on $(-\infty,0)$ and on $(0,\infty)$, and
\[
g(u)u\ge0.
\]
\end{enumerate}
These assumptions imply
\begin{equation}
0\le qF(u)\le f(u)u,
\end{equation}
and
\begin{equation}
0\le g(u)u\le qG(u).
\end{equation}
Furthermore, for every $\eps>0$ there are constants
$C_{F,\eps},C_{G,\eps}>0$ such that
\begin{equation}
F(u)\ge C_{F,\eps}|u|^q-\eps |u|^2,
\qquad
G(u)\le C_{G,\eps}|u|^q+\eps |u|^2.
\end{equation}
The main example is given by pure powers, see \eqref{eq:pure-power-problem}.

The energy functional associated with \eqref{eq:model-basic} is
\begin{equation}\label{eq:J-general}
J(u)=\frac12\|u^+\|^2-\frac12\|u^-\|^2
-\int_{\R^N}F(u)\,dx+\lambda\int_{\R^N}G(u)\,dx.
\end{equation}
Under (F1)--(F2) and (G1)--(G2), $J\in C^1(X,\R)$, and its critical
points are precisely the weak solutions of \eqref{eq:model-basic}.
For the functional \eqref{eq:J-general} one may write
\[
J(u)=\frac12\|u^+\|^2-\frac12\|u^-\|^2-I(u),
\]
where
\[
I(u)=\int_{\R^N}F(u)\,dx-\lambda\int_{\R^N}G(u)\,dx.
\]
In the classical strongly indefinite theory, one often assumes
$I(u)\ge0$. This is a powerful technical assumption because it yields
good upper semicontinuity properties with respect to the product topology
\[
u_n^+\to u^+,\qquad u_n^-\weak u^-.
\]
For competing nonlinearities, however, the term $\lambda G$ may dominate
locally, especially near the origin, so $I$ need not be nonnegative. Thus
one cannot directly apply the classical Kryszewski-Szulkin linking theorem
or the Nehari-Pankov method.

This obstruction is the central point of \cite{BB2022}. The authors
replace the missing $\tau$-upper semicontinuity by a stronger geometric
condition in a generalized linking theorem. This produces a Cerami
sequence which is bounded away from zero. A careful boundedness argument
and a concentration-compactness procedure then yield a nontrivial critical
point.

The paper is organized as follows. Sections 2 and 3 recall the abstract critical point theory and its applications to existence and multiplicity. Section 4 contains the new ground state result for pure-power nonlinearities.

\section{A generalized linking theorem and the existence result}

Let $X=X^+\oplus X^-$ be a real Hilbert space. On $X$ one introduces the
$\tau$-topology generated by the norm
\[
\vertiii{u}=\max\left\{
\|u^+\|,
\sum_{k=1}^\infty 2^{-k-1}|(u^-,e_k)|
\right\},
\]
where $(e_k)$ is a complete orthonormal sequence in $X^-$. For bounded
sequences one has
\[
u_n\xrightarrow{\tau}u
\quad\Longleftrightarrow\quad
u_n^+\to u^+ \text{ in }X^+,\ u_n^-\weak u^- \text{ in }X^-.
\]

For $u\in X\setminus X^-$ and $R>r>0$ define
\[
S_r^+=\{v\in X^+:\|v\|=r\}
\]
and
\[
M(u)=\{tu+v^-:t\ge0,\ v^-\in X^-,\ \|tu+v^-\|\le R\}.
\]
This is the linking cylinder generated by the direction $u$ and the space $X^-$. The main abstract theorem of \cite{BB2022} may be summarized as follows.

\begin{theorem}[Generalized linking, {\cite[Theorem 2.1]{BB2022}}]
Let $J\in C^1(X,\R)$, $J(0)=0$, and assume that $J'$ is sequentially
weak-to-weak$^*$ continuous. Suppose that there exist $\delta>0$, $r>0$,
and a nonempty set $\cP\subset X\setminus X^-$ such that for every
$u\in\cP$ one can choose $R=R(u)>r$ satisfying
\[
\inf_{S_r^+}J>
\max\left\{
\sup_{\partial M(u)}J,\,
\sup_{\vertiii{v}\le\delta}J(v)
\right\}.
\]
Then there exists a Cerami sequence $(u_n)\subset X$ such that
\[
\sup_n J(u_n)\le c,\qquad
(1+\|u_n\|)J'(u_n)\to0,
\qquad
\inf_n \vertiii{u_n}\ge\frac{\delta}{2},
\]
where
\[
c\ge \inf_{S_r^+}J>0.
\]
\end{theorem}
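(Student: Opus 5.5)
We argue by contradiction through a deformation argument in the $\tau$-topology, adapting the Kryszewski--Szulkin scheme. Fix $u\in\cP$ with $R=R(u)$. Define the minimax level
\[
c:=\inf_{h\in\Gamma}\sup_{v\in M(u)}J(h(1,v)),
\]
where $\Gamma$ is the class of homotopies $h:[0,1]\times M(u)\to X$ that are $\tau$-continuous, satisfy $h(0,\cdot)=\mathrm{id}$, and have $J(h(t,v))\le J(v)$. Each $h(t,\cdot)$ is also required to be $\tau$-locally of the form $v\mapsto v-g(t,v)$, with $g$ locally finite-dimensional.

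\textbf{Linking step.} The first step is to show that $h(1,M(u))\cap S_r^+\neq\emptyset$ for every $h\in\Gamma$. This uses the Kryszewski--Szulkin degree for $\tau$-continuous compact-type perturbations of the identity. On $\partial M(u)$ we have $J<\inf_{S_r^+}J$, so the homotopy never carries $\partial M(u)$ onto $S_r^+$. A degree computation with $P^-\oplus$ the $\R u$-component, as in the classical linking theorem, then gives the intersection. Consequently $c\ge \inf_{S_r^+}J$, and $c>0$ follows from the hypothesis together with $J(0)=0$ and $0\in\{\vertiii{v}\le\delta\}$.

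\textbf{Deformation and Cerami sequence.} Suppose there is no sequence with the three stated properties at level $\le c$, say at level $c+\eps$. Then there is $\eta>0$ such that $(1+\|v\|)\|J'(v)\|\ge\eta$ on the set
\[
A=\{v : c-\eps\le J(v)\le c+\eps,\ \vertiii{v}\ge\delta/2\}.
\]
Using the weak-to-weak$^*$ continuity of $J'$, we build a $\tau$-locally Lipschitz, locally finite-dimensional pseudo-gradient field $W$ on a $\tau$-open neighbourhood of $A$. At each point, choose a direction $w$ with $\langle J'(v),w\rangle>\tfrac12(1+\|v\|)\|J'(v)\|$ and normalise it by $(1+\|v\|)$ (Cerami scaling). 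The sequential weak-to-weak$^*$ continuity makes $v\mapsto \langle J'(v),w\rangle$ $\tau$-continuous on bounded sets, so the inequality persists on a $\tau$-neighbourhood. A $\tau$-locally finite partition of unity then glues these local directions.

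Multiply $W$ by a cutoff that vanishes where $\vertiii{v}\le\delta$ and outside $J^{-1}[c-2\eps,c+2\eps]$, and flow along $-W$. Take $h_0\in\Gamma$ with $\sup J(h_0(1,M(u)))\le c+\eps$. Composing $h_0$ with the flow pushes the set to level $\le c-\eps$, which contradicts the definition of $c$.

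The delicate point is points with $\vertiii{v}<\delta/2$, where no gradient bound is available. Here the hypothesis $\sup_{\vertiii{v}\le\delta}J<\inf_{S_r^+}J\le c$ is used: if $\eps$ is small enough, such points already have $J<c-\eps$. The cutoff fixes them, and they never return to the relevant level band, since the flow does not increase $J$. This is exactly the substitute for $\tau$-upper semicontinuity, which is no longer available because $I$ changes sign. The deformed sets remain admissible in $\Gamma$ because the flow stays $\tau$-continuous and finite-dimensional locally.

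\textbf{Main obstacle.} I expect the hardest part to be the $\tau$-continuity of the flow and the time-of-existence estimates. In particular, one must show the flow does not escape in finite time. This uses the Cerami normalisation, which gives $\|\tfrac{d}{dt}v\|\lesssim 1+\|v\|$ and hence a Gronwall bound, and the control of $J$ along the flow uses only the norm. I would follow Kryszewski--Szulkin's construction almost verbatim, checking only that the cutoff region $\{\vertiii{v}>\delta/2\}$ is $\tau$-open, so that the cutoff is $\tau$-continuous.
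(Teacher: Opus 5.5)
\textbf{Verdict: there is a genuine gap at the deformation step.} The paper only quotes this theorem and gives no proof, so I judge your argument on its own.

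\textbf{The level cutoff is not $\tau$-continuous.} Your cutoff vanishes outside $J^{-1}([c-2\eps,c+2\eps])$, so it is a function of $J(v)$. Such a function is not $\tau$-continuous here. The whole point of the theorem is that $J$ need not be $\tau$-upper semicontinuous once $I$ changes sign. In the Kryszewski--Szulkin scheme, $\tau$-upper semicontinuity means exactly that the sublevel sets $\{J<a\}$ are $\tau$-open, and this is what lets one switch the field off below the band in a $\tau$-continuous way. Without it, your flow is only norm-continuous and the composed homotopy is not in $\Gamma$. The degree argument giving $h(1,M(u))\cap S_r^+\neq\emptyset$ then no longer applies; for merely norm-continuous deformations this intersection property fails in infinite dimensions.

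You cannot simply drop the lower cutoff either. Your negation yields $(1+\|v\|)\|J'(v)\|\ge\eta$ only inside the band, and below $c-\eps$ there may be nontrivial critical points with $\vertiii{v}\ge\delta/2$.

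\textbf{Two smaller points.} First, negating the conclusion as stated ($\sup_n J(u_n)\le c$) gives information only on $\{J\le c\}$, not on $(c,c+\eps]$. A band argument yields at best $J(u_n)\to c$, which does not imply $\sup_n J(u_n)\le c$. Second, sequential weak-to-weak$^*$ continuity keeps $\langle J'(\cdot),w\rangle$ positive only on a $\tau$-neighbourhood intersected with a norm ball, because $\tau$-neighbourhoods contain norm-unbounded points. So the ball has to be fixed in advance.

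\textbf{How to repair it.} The shape of the conclusion (an upper bound on $J(u_n)$, no lower one) points to the fix: do not localise in levels at all.
\begin{itemize}
\item Take a level at which you already have an admissible set, e.g.\ $c=\sup_{M(u)}J$. This is $\ge\inf_{S_r^+}J$ because $M(u)$ meets $S_r^+$.
\item Assume $(1+\|v\|)\|J'(v)\|\ge\eta$ on all of $\{J\le c,\ \vertiii{v}\ge\delta/2\}$.
\item Put $a:=\sup_{\vertiii{v}\le\delta}J<\inf_{S_r^+}J$ and $T:=2(c-a)/\eta$. Let $B$ be a ball containing all Cerami-normalised trajectories from $M(u)$ on $[0,T]$; your Gronwall bound gives this.
\item Build the pseudo-gradient on a $\tau$-open set covering $B\cap\{J\le c,\ \vertiii{v}\ge\delta/2\}$, with the descent inequality holding on its intersection with $B$.
\item Multiply it only by $\chi(\vertiii{v})$, where $\chi=0$ on $[0,\delta/2]$ and $\chi=1$ on $[\delta,\infty)$. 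This cutoff is $\tau$-Lipschitz.
\end{itemize}
Then $J$ is nonincreasing along trajectories, so they stay in $B\cap\{J\le c\}$ and inside the domain of the field. $J$ drops at rate at least $\eta/2$ while $\vertiii{\sigma}\ge\delta$. Once $\vertiii{\sigma(t)}\le\delta$ for some $t$, one has $J(\sigma(t))\le a$, and this persists afterwards. Hence $J\le a<\inf_{S_r^+}J$ on $\sigma(T,M(u))$, contradicting linking.

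So the hypothesis $\sup_{\vertiii{v}\le\delta}J<\inf_{S_r^+}J$ is the $\tau$-open substitute for the sublevel set $\{J<a\}$. It is not merely a device for handling points near $0$, which is the only role you gave it.
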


The level $c>0$ is a minimax level introduced in \cite{BB2022}. We omit its precise definition here and refer the reader to \cite{BB2022}.

The importance of this theorem lies in the fact that it does not require
$\tau$-upper semicontinuity of the functional. Instead, the geometric
condition contains the additional inequality
\[
\inf_{S_r^+}J>\sup_{\vertiii{v}\le\delta}J(v),
\]
which prevents the minimax sequence from collapsing to zero in the
$\tau$-topology.

For the functional \eqref{eq:J-general}, one may take
$\cP=X^+\setminus\{0\}$ and verify the following three parts of the
linking geometry.

First, there exists $r>0$ such that
\[
\inf_{S_r^+}J>0.
\]
This follows from the fact that, on a sufficiently small sphere in $X^+$,
the positive quadratic part dominates the nonlinear terms, since
$f(u)=o(u)$ and $g(u)=o(u)$ as $u\to0$.

Second, for every $u\in X^+\setminus\{0\}$ there exists $R(u)>r$ such
that
\[
\sup_{\partial M(u)}J\le0.
\]
Here the crucial point is that $F$ grows faster than $|u|^q$ at infinity,
whereas $\lambda>0$ is sufficiently small. The focusing part forces the
energy to go to $-\infty$ on the boundary of the linking cylinder.

Third, for some $\delta>0$,
\[
\sup_{\vertiii{u}\le\delta}J(u)<\inf_{S_r^+}J.
\]
This condition is directly connected with the construction in
\cite{BB2022}, originally introduced in \cite{ChenWang}. It substitutes the missing upper semicontinuity.

Once the Cerami sequence bounded away from zero is obtained, one proves
its boundedness. The key boundedness statement is the following.

\begin{theorem}[Boundedness of Cerami sequences, {\cite[Proposition 5.2]{BB2022}}]
Assume that $\lambda>0$ and the parameter $\rho>0$ from (F5) are
sufficiently small. If $(u_n)\subset X$ satisfies
\[
J(u_n)\le\beta,\qquad
(1+\|u_n\|)J'(u_n)\to0,
\]
then $(u_n)$ is bounded in $X$.
\end{theorem}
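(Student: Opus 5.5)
We argue by contradiction: suppose $\|u_n\|\to\infty$ along a subsequence and set $v_n:=u_n/\|u_n\|$, so that $\|v_n\|=1$. The Cerami condition gives $J'(u_n)(u_n)\to 0$ and $J'(u_n)(u_n^+-u_n^-)\to0$. We pair these with the energy bound. Testing with $u_n$ and combining with $J(u_n)\le\beta$ gives
\[
\beta+o(1)\ge J(u_n)-\tfrac12 J'(u_n)(u_n)
=\int_{\R^N}\Bigl(\tfrac12 f(u_n)u_n-F(u_n)\Bigr)dx
-\lambda\int_{\R^N}\Bigl(\tfrac12 g(u_n)u_n-G(u_n)\Bigr)dx .
\]
By $qF\le f(u)u$, the first integrand is at least $(\tfrac12-\tfrac1q)f(u_n)u_n\ge0$. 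By $g(u)u\le qG$ together with (G1), the $g$-term is bounded by $\lambda C\int(|u_n|^2+|u_n|^q)$; the $|u_n|^2$ part is controlled by \eqref{eq:mu0}, but only as $\lambda C\|u_n\|^2$, which is not yet small. To get smallness we use the sharper estimate $G(u)\le C_{G,\eps}|u|^q+\eps|u|^2$. We also use (F5): on the region $\{|u_n|\ge\rho\}$ it gives $f(u_n)u_n\gtrsim |u_n|^p\ge \rho^{p-q}|u_n|^q$, so the $|u_n|^q$ part of the $\lambda$-term is absorbed by the $f$-term once $\lambda$ is small. On $\{|u_n|<\rho\}$ we have $|u_n|^q\le\rho^{q-2}|u_n|^2$, and this is where smallness of $\rho$ enters. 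The upshot is an estimate of the form
\[
\int_{\R^N} f(u_n)u_n\,dx \;\lesssim\; 1+(\lambda+\rho^{q-2})\|u_n\|^2 .
\]

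Next we test with $u_n^+-u_n^-$ to get $\|u_n\|^2=\int f(u_n)(u_n^+-u_n^-)-\lambda\int g(u_n)(u_n^+-u_n^-)+o(1)$ and divide by $\|u_n\|^2$. We split each integral into $\{|u_n|<\rho\}$ and $\{|u_n|\ge\rho\}$. On the small set, (F1), (F2) and (G2) make $|f(u_n)|,|g(u_n)|$ bounded by $o_\rho(1)|u_n|$, so that part is $o_\rho(1)\|u_n\|^2$. On the large set we use Hölder with $|f(u)|\lesssim|u|^{p-1}$:
\[
\int_{\{|u_n|\ge\rho\}}|f(u_n)|\,|u_n^\pm|\le\Bigl(\int f(u_n)u_n\Bigr)^{(p-1)/p}|u_n^\pm|_p .
\]
By the continuity of the projections in $L^p$ and Sobolev, $|u_n^\pm|_p\lesssim\|u_n\|$. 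We then combine this with the previous display. The $g$-part on the large set is handled the same way, with exponent $q$ and $|u|^{q}\le\rho^{q-p}|u|^p$.

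The main obstacle is that the exponents do not close automatically. From $\int f(u_n)u_n\lesssim \theta\|u_n\|^2$, with $\theta=\lambda+\rho^{q-2}$ small, we obtain a bound of order $\theta^{(p-1)/p}\|u_n\|^{2(p-1)/p}\cdot\|u_n\|$, and since $2(p-1)/p+1>2$ this superlinear growth is bad. The fix I would try is a Lions-type dichotomy for $v_n$, performed after first bounding $|u_n|_p^p\lesssim 1+\theta\|u_n\|^2$.

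In the \emph{vanishing} case, $v_n\to0$ in $L^s$ for $2<s<2^*$, and we use interpolation on the large set: $|u_n|^p\ge\rho^{p-s}|u_n|^s$ is exploited on $\{|u_n|\ge\rho\}$ with $s$ close to $2$. This makes $\|u_n\|^{-2}\int_{\{|u_n|\ge\rho\}}f(u_n)(u_n^+-u_n^-)\to0$, which yields $1\le o_\rho(1)+o(1)$, a contradiction.

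In the \emph{non-vanishing} case we translate by $\Z^N$ (the functional is invariant under these translations) so that $v_n\weak v\ne0$ with $v_n\to v$ a.e. Then $\int F(u_n)/\|u_n\|^2\ge\int_{\{v\ne0\}} \bigl(F(u_n)/|u_n|^q\bigr)|u_n|^{q-2}|v_n|^2\to\infty$ by (F3) and Fatou. Meanwhile $J(u_n)\ge-\beta'$, which follows from the Cerami pairing since $J(u_n)-\frac12J'(u_n)u_n\ge -C\lambda\int G$, forces $\int F(u_n)-\lambda\int G(u_n)\le \frac12\|u_n\|^2+C$. Since $\lambda\int G(u_n)\lesssim\lambda(\|u_n\|^2+|u_n|_q^q)$ and $|u_n|_q^q$ is controlled by $\int F$ on the large set, dividing by $\|u_n\|^2$ gives a contradiction.
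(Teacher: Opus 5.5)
Your first step (the bound $\int f(u_n)u_n\,dx\lesssim 1+\theta\|u_n\|^2$) and your treatment of $\{|u_n|<\rho\}$ are fine; in fact the $\rho^{q-2}$ contribution comes multiplied by $\lambda$, so $\theta=O_\rho(\lambda)$. The gap is the vanishing half of your dichotomy. With $u_n=\|u_n\|v_n$, the term you must send to zero is
\[
\|u_n\|^{-2}\int_{\{|u_n|\ge\rho\}}|u_n|^{p-1}|u_n^+-u_n^-|\,dx
=\|u_n\|^{p-2}\int_{\{|u_n|\ge\rho\}}|v_n|^{p-1}|v_n^+-v_n^-|\,dx .
\]
This carries the diverging factor $\|u_n\|^{p-2}$. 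Vanishing of $(v_n)$ only gives $\int|u_n|^s\,dx=o(\|u_n\|^s)$ for $2<s<2^*$, which is weaker than $o(\|u_n\|^2)$. The inequality $|u_n|^p\ge\rho^{p-s}|u_n|^s$ on the large set only bounds $\int_{\{|u_n|\ge\rho\}}|u_n|^s\,dx$ by $O(\|u_n\|^2)$, not $o(\|u_n\|^2)$. Indeed, take $M_n\to\infty$ widely separated lattice translates of one fixed profile. This gives a vanishing $(v_n)$ for which $\|u_n\|^{-2}\int_{\{|u_n|\ge\rho\}}|u_n|^p\,dx$ stays bounded away from zero. So the limit $0$ you claim does not follow from what you invoke. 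There is also a smaller slip in the non-vanishing case: $J(u_n)-\frac12J'(u_n)[u_n]\ge-C\lambda\int G(u_n)\,dx$ does not give a uniform bound $J(u_n)\ge-\beta'$, because $\int G(u_n)\,dx$ is not known to be bounded. You have to carry that term along; your final absorption can handle it.

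The difficulty that sent you to the dichotomy comes from the estimate $|u_n^\pm|_p\lesssim\|u_n\|$, which throws away information. In your Hölder step, use instead the $L^p$-continuity of the projections in the form \eqref{eq:plus-minus}, $|u_n^+-u_n^-|_p\le D_p|u_n|_p$. Combine it with $|u_n|_p^p\le A_n+\rho^{p-2}|u_n|_2^2$, where $A_n:=\int_{\{|u_n|\ge\rho\}}|u_n|^p\,dx\lesssim 1+\theta\|u_n\|^2$ by (F5) and your first step. Then, by \eqref{eq:mu0},
\[
\int_{\{|u_n|\ge\rho\}}|f(u_n)|\,|u_n^+-u_n^-|\,dx
\lesssim A_n^{(p-1)/p}\bigl(A_n+\rho^{p-2}\mu_0^{-1}\|u_n\|^2\bigr)^{1/p}
\lesssim 1+(\theta+\rho^{p-2})\|u_n\|^2,
\]
which is quadratic rather than superquadratic in $\|u_n\|$. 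Add your small-set bound and the analogous estimate for the $\lambda g$-term. Testing with $u_n^+-u_n^-$ then yields $\|u_n\|^2\le C+\kappa\|u_n\|^2+o(1)$ with $\kappa<1$ once $\rho$ and then $\lambda$ are small, i.e.\ boundedness directly. This is the splitting argument the paper sketches: $\{|u_n|<\rho\}$ is dominated by the quadratic part, and on $\{|u_n|\ge\rho\}$ (F5) lets $f$ control $g$ for small $\lambda$. No vanishing/non-vanishing alternative is needed.
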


The proof splits the domain into the regions $\{|u_n|<\rho\}$ and
$\{|u_n|\ge\rho\}$. On the first region, the nonlinear terms are small
with respect to the linear part. On the second region, condition (F5)
allows one to control $g$ by $f$, provided that $\lambda$ is sufficiently
small.

Having boundedness, one uses a concentration-compactness principle. If the sequence vanishes, then
\[
|u_n|_s\to0,\qquad 2<s<2^*,
\]
which contradicts the fact that the sequence is bounded away from zero in
the $\tau$-topology. Hence, after translations in $\Z^{N}$, the sequence
has a nontrivial weak limit. By weak-to-weak$^*$ continuity of $J'$, this
weak limit is a nontrivial critical point.

Thus, we obtain the following existence result.

\begin{theorem}[Existence of a nontrivial solution, {\cite[Theorem 1.2]{BB2022}}]\label{thm:BB-existence}
Assume (\ref{ass:V}), (F1)--(F5), and (G1)--(G3). If $\lambda>0$ and the
parameter $\rho>0$ from (F5) are sufficiently small, then
\eqref{eq:model-basic} has a nontrivial weak solution.
\end{theorem}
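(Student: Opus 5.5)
The plan is to assemble the three ingredients already recalled in this section: the generalized linking theorem, the boundedness of Cerami sequences, and a concentration--compactness argument.

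\emph{Step 1: the linking geometry.} Take $\cP=X^+\setminus\{0\}$. For the sphere, (F2), (G2) and the growth bounds give $|I(u)|\le \eps|u|_2^2+C_\eps|u|_p^p+\lambda C|u|_q^q$. Sobolev embeddings and \eqref{eq:mu0} then yield $J(u)\ge \frac14 r^2$ on $S_r^+$ for small $r$. For the boundary of $M(u)$, we argue by contradiction. Suppose there are $w_n=t_nu+v_n^-$ with $\|w_n\|\to\infty$ and $J(w_n)>0$. Normalize $z_n=w_n/\|w_n\|$. Since $\mathrm{span}\{u\}$ is finite-dimensional, $z_n^+$ converges strongly. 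If the limit $z\ne0$, then (F3) together with $G\lesssim |u|^2+|u|^q$ and $\lambda$ small force $I(w_n)/\|w_n\|^2\to\infty$, which contradicts $J(w_n)>0$. If $z^+=0$, then $J(w_n)\le \frac12\|z_n^+\|^2\|w_n\|^2-\frac12\|z_n^-\|^2\|w_n\|^2+\lambda\int G(w_n)$. This is where the $\lambda$-smallness and (F5) (dominating $G$ by $F$ on $\{|w_n|\ge\rho\}$ and by $|w|^2$ on the small region) are used to keep the sign negative. On the remaining parts of $\partial M(u)$, namely $t=0$ or $\|\cdot\|=R$, we get $J\le \lambda\int G-\frac12\|v^-\|^2\le 0$ once $\lambda\int G(v^-)\le\frac12\|v^-\|^2$, which again needs $\lambda$ small.

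\emph{Step 2: the $\tau$-small ball, the main obstacle.} We must show $\sup_{\vertiii{v}\le\delta}J<\inf_{S_r^+}J$. On this ball $\|v^+\|\le\delta$, so $J(v)\le \frac12\delta^2-\frac12\|v^-\|^2+\lambda\int G(v)$. The danger is that $\|v^-\|$ is unbounded. Using $G(v)\lesssim |v|^2+|v|^q$, the term $\lambda\int G(v)$ is controlled by $\lambda(\|v\|^2+\|v\|^q)$, and $F\ge0$ only helps. For large $\|v^-\|$ we also subtract $\int F(v)\gtrsim |v|_q^q-\eps|v|_2^2$ via (F3), so that $-\int F+\lambda\int G\le \lambda C\|v\|^2$ up to terms absorbed for small $\lambda$. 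Hence $J(v)\le \frac12\delta^2+C\lambda\delta^2-\frac14\|v^-\|^2$ for $\lambda$ small. Choosing $\delta\ll r$ gives the inequality. I expect this step to be the delicate one: the constants must be balanced uniformly in $\|v^-\|$, following the Chen--Wang device.

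\emph{Step 3: compactness.} Since $J'$ is weak-to-weak$^*$ continuous by subcritical growth, the generalized linking theorem gives a Cerami sequence with $J(u_n)\le c$ and $\inf\vertiii{u_n}\ge\delta/2$. The boundedness theorem makes $(u_n)$ bounded. If $\sup_y\int_{B(y,1)}|u_n|^2\to0$, Lions' lemma gives $|u_n|_s\to0$ for $2<s<2^*$. Then, testing $J'(u_n)$ with $u_n^\pm$ and using (F1), (F2), (G1), (G2) together with the $L^s$-continuity of the projections, we get $\|u_n\|\to0$. This contradicts $\vertiii{u_n}\ge\delta/2$. Hence there are $y_n\in\Z^N$ such that $u_n(\cdot+y_n)\weak u\ne0$, using the invariance of $J$ under $\Z^N$-translations. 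Weak-to-weak$^*$ continuity of $J'$ then gives $J'(u)=0$, so $u$ is the desired nontrivial weak solution.
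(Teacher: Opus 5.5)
Your proposal is correct and follows the same route as the paper: the linking geometry with $\cP=X^+\setminus\{0\}$, the Cerami sequence with $\inf_n\vertiii{u_n}\ge\delta/2$, the boundedness result, and non-vanishing plus $\Z^N$-translations and weak-to-weak$^*$ continuity. Your explicit test with $u_n^\pm$ to get $\|u_n\|\to0$ also supplies the step the paper's sketch leaves implicit. One small fix: on the piece $t=0$ of $\partial M(u)$ and in the case $z=0$, keep $-\int F$ and use the pointwise bound $\lambda G(t)-F(t)\le C\lambda t^2$, which holds for $\lambda\le1$ by (F3) and (G1)--(G2) without (F5); the bound $\lambda\int G(v^-)\le\frac12\|v^-\|^2$ alone is not uniform in $\|v^-\|$, and $R(u)$ is chosen after $\lambda$.
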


\section{Multiplicity of solutions}

The paper \cite{BBS2025} develops an abstract multiplicity theory for
strongly indefinite functionals with a sign-changing nonlinear part. The
main point is that the lack of compactness caused by translations can be
handled by combining a profile decomposition with a suitable deformation
argument. In contrast to more classical strongly indefinite theories, the
functional is not required to be upper semicontinuous with respect to the
$\tau$-topology.

We first introduce the notation needed to formulate the abstract result.
Let $X$ be a real separable Hilbert space endowed with an orthogonal
decomposition
\[
X=X^+\oplus X^-.
\]
We assume that a group $\cG$ acts unitarily on $X$. Thus, there is a group
homomorphism
\[
\cG\ni \gamma\longmapsto T_\gamma\in \mathrm{GL}(X)
\]
such that every $T_\gamma$ is unitary. For simplicity, we write $\gamma u$ instead of
$T_\gamma u$.

A functional $J:X\to\R$ is called $\cG$-invariant if
\[
J(\gamma u)=J(u),
\qquad
\gamma\in\cG,\quad u\in X.
\]
If $J$ is $\cG$-invariant, then its gradient is $\cG$-equivariant, namely
\[
\nabla J(\gamma u)=\gamma\nabla J(u),
\qquad
\gamma\in\cG,\quad u\in X.
\]
Consequently, the set of critical points
\[
\crit(J):=\{u\in X:J'(u)=0\}
\]
is $\cG$-invariant. For $u\in X$, we denote by
\[
\mathcal{O}(u):=\{\gamma u:\gamma\in\cG\}
\]
the orbit of $u$ under the action of $\cG$. Two critical points $u,v\in
\crit(J)$ are called \emph{geometrically distinct} if
\[
\mathcal{O}(u)\neq \mathcal{O}(v).
\]

In the periodic Schr\"odinger setting, we take
\[
\cG=\Z^N,
\]
acting on $X=H^1(\R^N)$ by integer translations,
\[
(k*u)(x):=u(x-k),
\qquad
k\in\Z^N.
\]
We identify the action of $k\in\cG$ with the operator $u\mapsto k*u$ and
write $ku:=k*u$ when using the abstract notation.
Since $V$ is $\Z^N$-periodic, the operator $-\Delta+V$ commutes with this
action. Hence the spectral subspaces $X^+$ and $X^-$ are
$\Z^N$-invariant, and the functional $J$ is $\Z^N$-invariant. Moreover,
since $f$ and $g$ are odd, their primitives $F$ and $G$ are even, and
therefore $J$ is even:
\[
J(-u)=J(u),
\qquad
u\in X.
\]

\subsection{Dislocation spaces and \texorpdfstring{$\cG$}{G}-weak convergence}

The relevant compactness notion is stronger than ordinary weak convergence.
For a sequence $(u_n)\subset X$ and $u\in X$, we write
\[
u_n\rightharpoonup_{\cG} u
\]
if
\[
\lim_{n\to\infty}
\sup_{\gamma\in\cG}
\langle u_n-u,\gamma\varphi\rangle
=0
\]
for every $\varphi\in X$. Thus, $u_n\rightharpoonup_{\cG} u$ means that no
nontrivial weak component can be recovered from $u_n-u$ after applying an
arbitrary element of the group $\cG$.

We also use weak convergence of operators. A sequence $(\gamma_n)\subset\cG$
satisfies
\[
\gamma_n\rightharpoonup 0
\]
if
\[
\gamma_n\varphi\rightharpoonup 0
\qquad\text{in }X
\]
for every $\varphi\in X$.

\begin{definition}
The pair $(X,\cG)$ is called a \emph{dislocation space} if, for every
sequence $(u_n)\subset X$ and every sequence $(\gamma_n)\subset\cG$,
\[
\gamma_n\not\rightharpoonup 0,
\qquad
u_n\rightharpoonup 0
\]
imply, up to a subsequence,
\[
\gamma_nu_n\rightharpoonup 0.
\]
\end{definition}

This condition expresses the fact that weakly convergent profiles cannot
be changed into new nonzero weak limits by a bounded sequence of group
actions. It is precisely the structural assumption needed for abstract
profile decompositions.

For the translation action of $\Z^N$ on $H^1(\R^N)$, one has
\[
k_n\rightharpoonup 0
\quad\Longleftrightarrow\quad
|k_n|\to\infty.
\]
Moreover, if $(u_n)$ is bounded in $H^1(\R^N)$, then
\[
u_n\rightharpoonup_{\Z^N}0
\quad\Longleftrightarrow\quad
|u_n|_s\to0
\]
for every
\[
2<s<2^*.
\]
Hence, in the periodic Schr\"odinger case, $\cG$-weak convergence to zero is
equivalent to Lions-type vanishing in all subcritical Lebesgue spaces.

\subsection{Profile decompositions}

A basic consequence of the dislocation-space structure is that every
bounded sequence can be decomposed into a family of asymptotically
separated profiles.

\begin{theorem}[Profile decomposition, {\cite[Theorem 3.1]{TintarevFieseler}}]
Let $(X,\cG)$ be a dislocation space and let $(u_n)\subset X$ be bounded.
Then, after passing to a subsequence, there exist
\[
K\in\{0,1,2,\ldots\}\cup\{\infty\},
\]
nonzero elements
\[
w^j\in X\setminus\{0\},
\qquad
1\leq j\leq K,
\]
and sequences
\[
(\gamma_n^j)\subset\cG,
\qquad
1\leq j\leq K,
\]
such that, if $K\geq1$, then $\gamma_n^1=\mathrm{Id}$ and
\[
(\gamma_n^j)^{-1}u_n\rightharpoonup w^j
\]
for every $j$. Moreover, for $j\neq\ell$,
\[
\gamma_n^j(\gamma_n^\ell)^{-1}\rightharpoonup0,
\]
and
\[
\sum_{j=1}^K\|w^j\|^2
\leq
\limsup_{n\to\infty}\|u_n\|^2.
\]
Finally,
\[
u_n-\sum_{j=1}^K \gamma_n^j w^j
\rightharpoonup_{\cG}0.
\]
\end{theorem}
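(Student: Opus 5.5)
The plan follows the standard iterative extraction of profiles (Tintarev--Fieseler). Let $(u_n)$ be bounded. First pass to a subsequence with $u_n\weak w^1$, set $\gamma_n^1=\mathrm{Id}$, and put $v_n^1:=u_n-w^1$. If $v_n^1\rightharpoonup_{\cG}0$, stop with $K=1$, or with $K=0$ if $w^1=0$; in the latter case we simply relabel and the first profile is found in the next step. Otherwise there are $\varphi\in X$ and $(\gamma_n)\subset\cG$ with $\langle v_n^1,\gamma_n\varphi\rangle\not\to0$. Since $\cG$ acts unitarily, $\langle \gamma_n^{-1}v_n^1,\varphi\rangle\not\to0$. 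Hence, up to a subsequence, $\gamma_n^{-1}v_n^1\weak w^2\neq0$, and we set $\gamma_n^2:=\gamma_n$.

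Next I check the asymptotic orthogonality $\gamma_n^2(\gamma_n^1)^{-1}\weak0$. If instead $\gamma_n^{-1}\not\weak0$, then, since $v_n^1\weak0$, the dislocation property applied to $(v_n^1)$ and $(\gamma_n^{-1})$ gives $\gamma_n^{-1}v_n^1\weak0$, contradicting $w^2\neq0$. Because $\gamma_n^{-1}v_n^1\weak w^2$ and the action is unitary, the Pythagorean identity in Hilbert space gives
\[
\|u_n\|^2=\|w^1\|^2+\|w^2\|^2+\|v_n^2\|^2+o(1),
\qquad v_n^2:=u_n-w^1-\gamma_n^2w^2 .
\]
Indeed, $\langle w^1,\gamma_n^2w^2\rangle\to0$ because $(\gamma_n^2)^{-1}w^1\weak0$, which follows from $\gamma_n^2\weak0$ and unitarity (adjoint equals inverse).

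Iterating, at step $m$ I extract $w^{m}$ from $v_n^{m-1}=u_n-\sum_{j<m}\gamma_n^jw^j$ along new $\gamma_n^m$. For each $j<m$ I prove $\gamma_n^m(\gamma_n^j)^{-1}\weak0$ as follows. The sequence $(\gamma_n^j)^{-1}v_n^{m-1}$ converges weakly to $0$: the term $(\gamma_n^j)^{-1}u_n$ tends to $w^j$, the $j$-th subtracted profile gives $w^j$, and the other subtracted profiles vanish weakly by the pairwise relations already established. If the relative shift did not tend weakly to $0$, the dislocation property would force $(\gamma_n^m)^{-1}v_n^{m-1}\weak0$, contradicting $w^m\neq0$. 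Expanding the norm with these cross terms vanishing yields
\[
\sum_{j\le m}\|w^j\|^2\le\limsup_n\|u_n\|^2 .
\]
A diagonal subsequence handles all $m$ at once. When $K=\infty$, the summability of $\sum\|w^j\|^2$ then gives $\|w^j\|\to0$.

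The main obstacle is the final claim $u_n-\sum_j\gamma_n^jw^j\rightharpoonup_{\cG}0$ when $K=\infty$, where the sum is infinite and one must also make sense of it. To make the process exhaust everything, at each step I choose $w^m$ nearly maximal. Let $\eta_m$ be the supremum of $\|w\|$ over all weak limits $w$ of $\gamma_n^{-1}v_n^{m-1}$ along subsequences, and pick $\|w^m\|\ge\eta_m/2$. Summability forces $\eta_m\to0$.

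To handle convergence of the series, I would take a diagonal subsequence and truncate: replace the infinite sum by $\sum_{j\le M_n}$ with $M_n\to\infty$ slowly, chosen so that the partial sums are Cauchy uniformly in $n$ by almost orthogonality. Then any weak $\cG$-profile $w$ of the remainder would be a profile of $v_n^{m}$ for every $m$, up to small errors, so $\|w\|\le\eta_{m+1}\to0$ and hence $w=0$. This gives $\cG$-weak convergence to zero.
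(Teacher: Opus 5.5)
The paper itself gives no proof here, only the citation to Tintarev--Fieseler. Your scheme is the standard argument from there: nearly maximal extraction, asymptotic orthogonality from the dislocation property, the Pythagorean bound, a diagonal subsequence, and exhaustion via $\eta_m\to0$. For $K<\infty$ it is complete up to the two remarks in the second paragraph.

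The gap is the case $K=\infty$. The theorem asserts $u_n-\sum_{j=1}^{\infty}\gamma_n^jw^j\rightharpoonup_{\cG}0$, which presupposes that the series converges in $X$ for every $n$. Replacing it by $\sum_{j\le M_n}\gamma_n^jw^j$ proves a different statement, and nothing in your argument controls the full series or the tail $\sum_{j>M_n}\gamma_n^jw^j$. Almost orthogonality cannot do this. It gives $\langle\gamma_n^iw^i,\gamma_n^jw^j\rangle\to0$ as $n\to\infty$ for each fixed pair $(i,j)$, but says nothing about the pairs with $i,j\gg n$ that decide convergence at a fixed $n$; and $\sum_j\|w^j\|$ may diverge, so the triangle inequality does not help. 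In fact, after the diagonal extraction, $\gamma_n^j$ for $j$ large compared to $n$ is not even determined by your construction.

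Use this freedom. Since $M_n<j$ holds for only finitely many $n$ when $j$ is fixed, redefining $\gamma_n^j$ for $j>M_n$ changes none of the asymptotic conclusions. Take $M_n$ growing slowly enough that $|\langle\gamma_n^iw^i,\gamma_n^jw^j\rangle|\le2^{-i-j}$ for $i<j\le M_n$. Then choose $\gamma_n^j$, $j>M_n$, inductively so that the same bound holds for all $i<j$. This is possible because $K\ge2$ provides a weakly null sequence in $\cG$, for instance $(\gamma_k^1)^{-1}\gamma_k^2$, which makes $\gamma w^j$ almost orthogonal to any finite set of vectors. Then
$\|\sum_{j\in F}\gamma_n^jw^j\|^2\le\sum_{j\ge A}\|w^j\|^2+3\cdot4^{-A}$ for every finite $F\subset\{A,A+1,\dots\}$ and every $n$. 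Hence the series converges unconditionally and uniformly in $n$, the tail beyond $M_n$ tends to $0$ in norm, and your exhaustion argument applies to the true remainder.

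Two smaller points. First, at step $m$ you apply the dislocation property with the operators $(\gamma_n^m)^{-1}\gamma_n^j$, which carry $(\gamma_n^j)^{-1}v_n^{m-1}\weak0$ to $(\gamma_n^m)^{-1}v_n^{m-1}$. What you actually obtain is therefore $(\gamma_n^m)^{-1}\gamma_n^j\weak0$, equivalently $(\gamma_n^j)^{-1}\gamma_n^m\weak0$ by taking adjoints. This is also the relation you need when you say the other subtracted profiles vanish weakly after applying $(\gamma_n^j)^{-1}$. It coincides with the displayed $\gamma_n^m(\gamma_n^j)^{-1}\weak0$ for abelian $\cG$, such as $\Z^N$, but not in general.

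Second, your relabeling when $w^1=0$ silently drops the normalization $\gamma_n^1=\mathrm{Id}$. This cannot be avoided. Take $u_n=\varphi(\cdot-k_n)$ with $\varphi\ne0$, $k_n\in\Z^N$, $|k_n|\to\infty$: then $u_n\weak0$ but $u_n\not\rightharpoonup_{\cG}0$, so nonzero profiles and $\gamma_n^1=\mathrm{Id}$ are incompatible as the statement is written. Tintarev--Fieseler instead keep $\gamma_n^1=\mathrm{Id}$ and allow $w^1=0$. You should say explicitly which convention you are proving.
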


The condition
\[
\gamma_n^j(\gamma_n^\ell)^{-1}\rightharpoonup0
\]
means that the profiles are asymptotically separated by the group action.
In the periodic case, this amounts to saying that the corresponding
translation parameters diverge from one another. Thus, distinct profiles
concentrate in mutually diverging lattice cells.

For Palais--Smale sequences, the profiles inherit the critical-point
property. Indeed, suppose that
\[
J(u)
=
\frac12\|u^+\|^2
-
\frac12\|u^-\|^2
-
I(u)
\]
is $\cG$-invariant, $I\in C^1(X,\R)$, and $J'$ is sequentially
weak-to-weak$^*$ continuous. If $(u_n)$ is a bounded
Palais--Smale sequence, then every profile $w^j$ in the preceding
decomposition satisfies
\[
J'(w^j)=0.
\]
Furthermore, if
\[
\eta:=
\inf\{\|u\|:u\in\crit(J)\setminus\{0\}\}>0,
\]
then only finitely many profiles may occur, since
\[
K
\leq
\frac{\limsup_{n\to\infty}\|u_n\|^2}{\eta^2}.
\]

The stronger conclusion needed in the multiplicity argument follows from
the following uniform continuity property.

\begin{definition}
We say that $I'$ satisfies the \emph{$\cG$-weak continuity property},
abbreviated by \emph{(GWC)}, if, whenever $(v_n)\subset X$ is bounded and
\[
\varphi_n\rightharpoonup_{\cG}0,
\]
one has
\[
I'(v_n)(\varphi_n)\to0.
\]
\end{definition}

If, in addition,
\[
\inf\{\|u\|:u\in\crit(J)\setminus\{0\}\}>0,
\]
and $I'$ satisfies \emph{(GWC)}, then the $\cG$-weakly vanishing
remainder in the profile decomposition converges strongly to zero. Thus,
for a bounded Palais--Smale sequence one obtains
\[
u_n-\sum_{j=1}^K \gamma_n^j w^j\to0
\qquad\text{in }X.
\]
This strong profile decomposition is the main compactness ingredient in
the multiplicity theory.

\subsection{The discreteness property}

The multiplicity proof requires a strengthening of the fact that individual
orbits are discrete. Let $A\subset X$ be finite and let
$\ell\in\N$. Define
\[
[A,\ell]
:=
\left\{
\sum_{i=1}^j \gamma_i u_i:
1\leq j\leq\ell,\quad
\gamma_i\in\cG,\quad
u_i\in A
\right\}.
\]
Thus, $[A,\ell]$ consists of all superpositions of at most $\ell$ group
translates of elements of $A$.

\begin{definition}
A dislocation space $(X,\cG)$ is said to have the \emph{discreteness
property} if, for every finite set $A\subset X$ and every $\ell\in\N$,
\[
\inf
\left\{
\|u-v\|:
u,v\in[A,\ell],\quad u\neq v
\right\}
>0.
\]
\end{definition}

In particular, taking $\ell=1$ shows that the orbit
\[
\mathcal{O}(u)=\{\gamma u:\gamma\in\cG\}
\]
of every $u\in X$ is a discrete subset of $X$. The full discreteness
property is substantially stronger. It also ensures uniform separation of
all finite superpositions of translated profiles. This is important because
a Palais--Smale sequence may split into several spatially separated
critical profiles.

For the periodic problem,
\[
\bigl(H^1(\R^N),\Z^N\bigr)
\]
is a dislocation space with the discreteness property; see
\cite[Example~2.13]{BBS2025}. Therefore, the
abstract result applies to the translation action induced by the periodic
potential.

\subsection{Abstract multiplicity theorem}

We now state the abstract result from \cite{BBS2025} in a form adapted to
the present setting. Let
\[
J(u)
=
\frac12\|u^+\|^2
-
\frac12\|u^-\|^2
-
I(u),
\qquad
u=u^++u^-\in X^+\oplus X^-.
\]
For $r>0$, put
\[
S_r^+
:=
\{u\in X^+:\|u\|=r\}.
\]

\begin{theorem}[Multiplicity of critical orbits, {\cite[Theorem 3.2]{BBS2025}}]\label{thm:multiplicity-abstract}
Suppose that the following conditions hold:
\begin{enumerate}[label=\textup{(A\arabic*)}]
\item
$(X,\cG)$ is a dislocation space with the discreteness property, and
$X=X^+\oplus X^-$ is an orthogonal decomposition into $\cG$-invariant
subspaces, with $X^+$ infinite-dimensional;

\item
$I\in C^1(X,\R)$, $I(0)=0$, $I'$ is sequentially weak-to-weak$^*$
continuous, and $I'$ satisfies (GWC);

\item
$J$ is even and $\cG$-invariant;

\item
$J$ possesses at least one nontrivial critical point;

\item
there exist $r>0$ and $r_0>0$ such that
\[
\inf_{S_r^+}J
>
\sup\{J(u):u\in X,\ \|u^+\|<r_0\};
\]

\item
whenever
\[
\|u_n^-\|\to\infty
\]
and $(u_n^+)$ is bounded in $X^+$, one has
\[
J(u_n)\to-\infty;
\]

\item
$J$ is bounded from above on bounded subsets of $X$;

\item
if $(u_n^+)$ is contained in a finite-dimensional subspace of $X^+$ and
\[
\|u_n^+\|\to\infty,
\]
then
\[
\frac{I(u_n^+)}{\|u_n^+\|^2}\to\infty;
\]

\item
for every $\beta\in\R$, there exists $M_\beta>0$ such that every sequence
$(u_n)\subset X$ satisfying
\[
\limsup_{n\to\infty}J(u_n)\leq\beta,
\qquad
J'(u_n)\to0,
\]
also satisfies
\[
\limsup_{n\to\infty}\|u_n\|\leq M_\beta.
\]
\end{enumerate}
Then $J$ has infinitely many geometrically distinct critical points.
\end{theorem}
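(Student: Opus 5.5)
We argue by contradiction: suppose $J$ has only finitely many geometrically distinct critical points. By (A3) the critical set is symmetric under $u\mapsto -u$ and under the action of $\cG$, so we may choose a finite set $A\subset\crit(J)$ with $A=-A$ such that $\crit(J)=\bigcup_{a\in A}\mathcal{O}(a)$. The plan is to build a $\cG$-equivariant, odd deformation of sublevel sets and use it, together with a Krasnoselskii-genus (or cohomological-index) minimax on sets that link with $S_r^+$, to produce infinitely many distinct critical levels. These levels would then force infinitely many critical orbits, which is the desired contradiction.

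\textbf{Compactness.} Let $(u_n)$ be a Palais--Smale sequence with $J(u_n)\le\beta$. By (A9) it is bounded by $M_\beta$. Since the set $A$ is finite, $\eta:=\inf\{\|u\|:u\in\crit(J)\setminus\{0\}\}>0$. The strong profile decomposition then applies, using (A1), (A2) and (GWC): up to a subsequence,
\[
u_n-\sum_{j=1}^K\gamma_n^jw^j\to0 \quad\text{in } X,\qquad K\le M_\beta^2/\eta^2,
\]
with each $w^j\in\crit(J)\setminus\{0\}$. So every such sequence asymptotically lies in $[A,\ell]$ with $\ell=\lfloor M_\beta^2/\eta^2\rfloor$. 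The discreteness property makes $[A,\ell]$ uniformly discrete, say with separation $2\kappa>0$. The standard Bartsch--Ding type argument then gives the following: any Palais--Smale sequence whose elements travel distance at least $\kappa$ in $X$ must pass through a region where $\|J'\|\ge\alpha(\beta)>0$. This substitutes for the Palais--Smale condition and yields the deformation lemma. For each level $c\le\beta$ and each symmetric $\cG$-invariant neighbourhood $U$ of $K_c\cup\{0\}$ (built from small balls around $[A,\ell]\cap\{J=c\}$), we obtain an odd $\cG$-equivariant homotopy $h$ satisfying $h(J^{c+\eps}\setminus U)\subset J^{c-\eps}$. We also require $h$ to preserve the class of admissible maps, that is, maps of the form $u\mapsto u^+ + (\text{compact in the }\tau\text{-sense})$. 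To get this we deform along a $\tau$-locally Lipschitz pseudo-gradient field, in the spirit of Kryszewski--Szulkin.

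\textbf{Minimax and the role of the hypotheses.} Fix finite-dimensional subspaces $E_k\subset X^+$ with $\dim E_k=k$; this uses that $X^+$ is infinite-dimensional, by (A1). Set $Y_k=E_k\oplus X^-$. By (A6) and (A8), $J\to-\infty$ as $\|u\|\to\infty$ on $Y_k$, so $J\le \inf_{S_r^+}J$ outside a large ball $B_{R_k}$ in $Y_k$. For each $k$ define the level
\[
c_k=\inf_{h}\ \sup J\bigl(h(B_{R_k}\cap Y_k)\bigr),
\]
where $h$ ranges over odd admissible deformations equal to the identity on the set where $J\le \sup\{J(u):\|u^+\|<r_0\}$. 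Three facts about these levels are needed. First, an intersection lemma: each admissible image meets $S_r^+$, proved by a degree or genus argument in the spirit of Benci's pseudo-index. Together with (A5), this gives $c_k\ge\inf_{S_r^+}J$. Second, (A7) gives $c_k<\infty$. Third, the levels are monotone, $c_k\le c_{k+1}$. Assumption (A5) is also used to say that deformations never need to move points with small $\|u^+\|$, which replaces $\tau$-upper semicontinuity. Assumption (A4) ensures $A\neq\emptyset$, so that $\eta$ is a genuine infimum over a nonempty set.

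With these in hand, the deformation lemma shows that each $c_k$ is a critical value. If $c_k=c_{k+m}$, the critical set at that level has genus at least $m+1$. But near level $c$ this set is the discrete, $\cG$-invariant, symmetric set $[A,\ell]\cap\{J=c\}$, whose genus is at most $1$ because it is a discrete set of symmetric pairs. Hence $c_k<c_{k+1}$ for all $k$. On the other hand, a finite $A$ produces only finitely many values $J(a)$, and at any level $\beta$ only finitely many sums of translates of elements of $A$ are compatible with the energy and profile constraints. This contradicts $c_k\uparrow$ with infinitely many distinct values, and in particular excludes a bounded increasing sequence. The unbounded case must be excluded separately, for instance by showing that $J$ is bounded on critical points when $\crit(J)/\cG$ is finite, which would make $\sup_k c_k$ finite. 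I expect the main obstacle to be the construction of the equivariant deformation preserving the admissible class without $\tau$-upper semicontinuity of $J$. I would try first to localize the pseudo-gradient flow using (A5) and the uniform separation given by the discreteness property.
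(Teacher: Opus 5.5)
\textbf{Gap: your minimax levels are not shown to be critical values.} Your compactness step is correct: finiteness of $A$ gives $\eta>0$, the strong profile decomposition places Palais--Smale sequences below $\beta$ asymptotically in $[A,\ell(\beta)]$, and the discreteness property gives the separation $2\kappa$. The overall architecture also matches the paper's sketch (contradiction, finite $A$, discreteness of $[A,\ell]$, odd deformations, Benci-type pseudo-index).

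The failure is that you identify the critical set at level $c$ with $[A,\ell]\cap\{J=c\}$. You then claim a deformation $h(J^{c+\eps}\setminus U)\subset J^{c-\eps}$ for neighbourhoods $U$ of $K_c\cup\{0\}$. But elements of $[A,\ell]$ are sums of translated critical points and are in general \emph{not} critical. The profile decomposition only says that a Palais--Smale sequence at level $c$ approaches such sums, with $c=\sum_{j=1}^K J(w^j)$ and $w^j\in A$. For instance, $a+\gamma_n a$ with $\gamma_n\weak 0$ is a Palais--Smale sequence at level $2J(a)$, which need not be a critical value. Near such sequences the lower bound on $\|J'\|$ outside $U$, which your deformation lemma needs, fails.

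A deformation derived from your compactness information must therefore avoid neighbourhoods of all multi-profile configurations (with energies in $[c-\delta,c+\delta]$, not exactly $c$). It then only shows that $c_k$ lies in the set $\Lambda$ of sums $\sum_{j\le K}J(w^j)$, not that $c_k\in J(\crit(J))$. This is why you are left with an ``unbounded case'', and your proposed fix does not work. $\Lambda\cap(-\infty,\beta]$ is finite, but $\Lambda$ is unbounded: it contains $K\,J(a)$ for every $K$, since $\ell(\beta)$ grows with $\beta$. Boundedness of $J$ on $\crit(J)$ says nothing about $\sup_k c_k$ unless the $c_k$ are already known to be critical values.

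\textbf{Missing idea: control whole flow lines, not just Palais--Smale sequences.} Take a pseudo-gradient flow line that stays in a strip $J^{-1}([c-\eps,c+\eps])$, with $\eps$ small compared with the energy lost in crossing a region of width comparable to $\kappa$ where $\|J'\|\ge\alpha(\beta)$. Such a line can neither travel between two distinct points of $[A,\ell]$ nor oscillate infinitely often across a thin annulus around one of them. Hence it has finite length and converges in norm. Since $J'\to0$ along a sequence of times, its limit is a \emph{genuine} critical point.

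Consequences. If $[c-\eps,c+\eps]$ contains no critical value (possible for small $\eps$, since there are finitely many), every flow line from $J^{c+\eps}$ enters $J^{c-\eps}$ in finite, though non-uniform, time. The entrance-time map is then the required odd equivariant deformation. In the same way one only has to remove a small neighbourhood of the true $K_c$, which has genus $1$. This gives $c_k\in J(\crit(J))$, a finite set, so $c_k<c_{k+1}$ yields the contradiction at once, with no unbounded case.

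\textbf{Two further repairs.}
\begin{enumerate}
\item You define $c_k=\inf_h\sup J(h(B_{R_k}\cap Y_k))$, i.e.\ images of one fixed set. With this family the step ``$c_k=c_{k+m}$ forces genus at least $m+1$'' is unavailable: it requires removing a symmetric neighbourhood of $K_c$ from a set of the $(k+m)$-th class and landing in the $k$-th class. You need a genuine pseudo-index $i^*(B)=\min_h\mathrm{gen}(h(B)\cap S_r^+)$ with $c_k=\inf\{\sup_B J: i^*(B)\ge k\}$. Your intersection lemma must then give $\mathrm{gen}(h(B_{R_k}\cap Y_k)\cap S_r^+)\ge k$, not merely a nonempty intersection.
\item (A8) as stated controls $I$ only at points of $X^+$. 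So $J\to-\infty$ on $E_k\oplus X^-$ does not follow from (A6) and (A8) alone.
\end{enumerate}
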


The proof is based on a Benci-type pseudoindex construction. Assuming that
there are only finitely many critical orbits, one chooses a finite set
$A\subset\crit(J)$ containing representatives of these orbits. The profile
decomposition and property \emph{(GWC)} imply that Palais--Smale sequences
at a fixed level are asymptotically close to sets of the form
\[
[A,\ell]
\]
for a suitable $\ell\in\N$. The discreteness property gives a positive
uniform distance between distinct points of $[A,\ell]$. This makes it
possible to construct odd admissible deformations avoiding neighborhoods
of all critical profiles. The resulting pseudoindex argument yields a
contradiction with the assumption that only finitely many critical orbits
exist.

\subsection{Application to the periodic Schr\"odinger equation}

We now return to the functional
\[
J(u)
=
\frac12\|u^+\|^2
-
\frac12\|u^-\|^2
-
\int_{\R^N}F(u)\,dx
+
\lambda\int_{\R^N}G(u)\,dx.
\]
Under assumption \textup{(V)}, the spectral splitting
\[
X=H^1(\R^N)=X^+\oplus X^-
\]
is invariant under the action of $\Z^N$. Moreover, the periodicity of $V$
implies that $J$ is $\Z^N$-invariant, while the oddness of $f$ and $g$
implies that $J$ is even.

The nonlinear part
\[
I(u)
=
\int_{\R^N}F(u)\,dx
-
\lambda\int_{\R^N}G(u)\,dx
\]
satisfies the property \emph{(GWC)}. Indeed, if $(v_n)$ is bounded in
$H^1(\R^N)$ and
\[
\varphi_n\rightharpoonup_{\Z^N}0,
\]
then
\[
|\varphi_n|_p\to0,
\qquad
|\varphi_n|_q\to0.
\]
The growth assumptions on $f$ and $g$ then imply
\[
I'(v_n)(\varphi_n)\to0.
\]

The remaining assumptions of Theorem~\ref{thm:multiplicity-abstract} follow from the linking geometry, the
anti-coercivity of $J$ in $X^-$, and the
boundedness result for Palais--Smale sequences established in
\cite{BBS2025}. Consequently, one obtains the following multiplicity
result.

\begin{theorem}[Multiplicity for the periodic Schr\"odinger equation, {\cite[Theorem 4.7]{BBS2025}}]
Assume (\ref{ass:V}), (F1)--(F5), and (G1)--(G3). If
$\lambda>0$ and the parameter $\rho>0$ from (F5) are sufficiently
small, then
\[
-\Delta u+V(x)u=f(u)-\lambda g(u)
\]
has infinitely many pairs $\pm u$ of geometrically distinct weak
solutions.
\end{theorem}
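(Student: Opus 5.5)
The statement is an application of the abstract Theorem~\ref{thm:multiplicity-abstract}, so I would verify hypotheses (A1)--(A9) one by one for $X=H^1(\R^N)$, $\cG=\Z^N$ acting by translations, and the functional $J$ in \eqref{eq:J-general}.

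Several hypotheses are already available from the preceding discussion. (A1) holds because $(H^1(\R^N),\Z^N)$ is a dislocation space with the discreteness property, and the spectral subspaces are $\Z^N$-invariant. Moreover, $X^+$ is infinite-dimensional, since the spectrum of the periodic operator is purely absolutely continuous and unbounded above. (A3) follows from periodicity of $V$ and oddness of $f,g$. (A4) is Theorem~\ref{thm:BB-existence}, valid for $\lambda,\rho$ small. In (A2), $I(0)=0$ and $I\in C^1$ follow from (F1),(F2),(G1),(G2). Weak-to-weak$^*$ continuity of $I'$ comes from local compactness of Sobolev embeddings and the growth bounds. (GWC) was shown above via $|\varphi_n|_p,|\varphi_n|_q\to0$ and Hölder's inequality.

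The remaining conditions are geometric. For (A7), I would use $J(u)\le\frac12\|u\|^2+\lambda\int G(u)\lesssim \|u\|^2+\|u\|^q$. For (A6), $F\ge0$ and $G(u)\lesssim |u|^2+|u|^q$ give
\[
J(u_n)\le \tfrac12\|u_n^+\|^2-\tfrac12\|u_n^-\|^2+C\lambda\left(|u_n|_2^2+|u_n|_q^q\right).
\]
This alone is not enough, because the $|u_n^-|_q^q$ term may dominate. So I would instead exploit $F(u)-\lambda G(u)\ge -C\lambda|u|^2$ for $|u|$ large (from (F3),(F5) and $q<p$), together with $F(u)-\lambda G(u)\ge -\eps|u|^2 - C_\eps\lambda |u|^q$ near $0$. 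The natural route is the inequality $F-\lambda G\ge c|u|^p - C\lambda^{p/(p-q)}\cdots$, where the negative part is bounded by $C(\lambda)|u|^2$ with $C(\lambda)\to0$ as $\lambda\to0$, using $G(u)\le C|u|^q$ and Young's inequality $\lambda|u|^q\le \tfrac12 c|u|^p+C\lambda^{(p-2)/(p-q)}|u|^2$. Then $I(u)\ge -\eps_\lambda|u|_2^2\ge-\eps_\lambda\mu_0^{-1}\|u\|^2$, which gives (A6) for small $\lambda$ and also $\inf_{S_r^+}J>0$. For (A5), with $\|u^+\|<r_0$ small, the same lower bound on $I$ gives
\[
J(u)\le\tfrac12 r_0^2-\left(\tfrac12-\eps_\lambda\mu_0^{-1}\right)\|u^-\|^2+\eps_\lambda\mu_0^{-1} r_0^2,
\]
which is below $\inf_{S_r^+}J$ once $r_0\ll r$. (A8) follows from (F3), equivalence of norms on finite-dimensional subspaces, and the fact that $\lambda\int G(u)\lesssim\|u\|^2+\|u\|^q$ is of lower order.

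The main obstacle is (A9), the uniform bound on Palais--Smale sequences with $\limsup J\le\beta$. I would adapt the boundedness argument for Cerami sequences in \cite[Proposition~5.2]{BB2022}. Arguing by contradiction with $\|u_n\|\to\infty$, I would use
\[
J(u_n)-\tfrac12J'(u_n)u_n=\int\left(\tfrac12 f(u_n)u_n-F(u_n)\right)-\lambda\int\left(\tfrac12 g(u_n)u_n-G(u_n)\right).
\]
On $\{|u_n|\ge\rho\}$ the first integrand dominates $|u_n|^p$ by (F5), and the second is controlled by $|u_n|^q\le\rho^{q-p}|u_n|^p$ times $\lambda$. On $\{|u_n|<\rho\}$ everything is $o(|u_n|^2)$. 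Testing with $u_n^+-u_n^-$ and normalizing $v_n=u_n/\|u_n\|$ then leads to the vanishing/nonvanishing dichotomy: vanishing contradicts $\|v_n\|=1$, while nonvanishing after translation contradicts (F3) superquadraticity. The subtlety is that we only have $J'(u_n)\to0$ rather than $(1+\|u_n\|)J'(u_n)\to0$. I expect \cite{BBS2025} handles this by showing that $\|J'(u_n)\|\,\|u_n\|$ terms are absorbed by the $p$-power coercivity, i.e.\ $|u_n|_p^p\lesssim 1+o(1)\|u_n\|$. I would follow that estimate, choosing $\lambda$ and $\rho$ small.

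With (A1)--(A9) verified, Theorem~\ref{thm:multiplicity-abstract} yields infinitely many geometrically distinct critical points. Since $J$ is even, they come in pairs $\pm u$.
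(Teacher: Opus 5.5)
Your overall route matches the paper's: apply Theorem~\ref{thm:multiplicity-abstract} and check (A1)--(A9), importing the dislocation and discreteness structure, (GWC), and the existence of one nontrivial critical point. Your checks of (A5)--(A8) rest on the pointwise bound $F(u)-\lambda G(u)\ge -C\lambda u^2$ for $\lambda\le 1$. This bound holds because $G(u)\le C_Mu^2$ for $|u|\le M$ by (G1)--(G2), while $F\ge G\ge 0$ for $|u|\ge M$ by (F3) and (G1). These checks are correct and more explicit than the paper, which only cites \cite{BBS2025}. There are two small slips:
\begin{itemize}
\item $\inf_{S_r^+}J>0$ does not follow from a lower bound on $I$. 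Use instead $I(u)\le\int_{\R^N}F(u)\,dx\le\eps|u|_2^2+C_\eps|u|_p^p$ on $X^+$, since $G\ge0$.
\item Your ``natural route'' through $F\ge c|u|^p$ and $G\le C|u|^q$ is valid only for pure powers; for instance, $g(u)=|u|^{s-2}u$ with $2<s<q$ satisfies (G1)--(G3). Your first route is the one to keep.
\end{itemize}

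The real problem is (A9). The paper does not prove it here but imports it from \cite{BBS2025}; if you do the same, your proposal is complete. The argument you sketch, however, would not close as written. From $J(u_n)-\frac12J'(u_n)u_n$ you only obtain
\[
\int_{\{|u_n|\ge\rho\}}|u_n|^p\,dx\lesssim 1+|\beta|+o(1)\|u_n\|+\lambda|u_n|_2^2 .
\]
The reason is that $-\lambda\bigl(\frac12g(u_n)u_n-G(u_n)\bigr)$ can be genuinely negative on $\{|u_n|<\rho\}$. For pure powers it is, and it dominates the $f$-contribution where $|u_n|$ is small. Since $|u_n|_2^2$ is of order $\|u_n\|^2$, the bound $|u_n|_p^p\lesssim 1+o(1)\|u_n\|$ does not follow from ``$p$-power coercivity''. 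Nor do your estimates exclude the vanishing alternative for $v_n=u_n/\|u_n\|$: $|v_n|_p\to0$ alone does not force $\|u_n\|^{-2}\int_{\{|u_n|\ge\rho\}}|f(u_n)||u_n^+-u_n^-|\,dx\to0$.

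The missing ingredient is the $L^p$-continuity of the spectral projections \eqref{eq:plus-minus}. Test $J'(u_n)$ with $u_n^+-u_n^-$, bound $|u_n^+-u_n^-|_p\le D_p|u_n|_p$ and $|u_n|_p^p\le\int_{\{|u_n|\ge\rho\}}|u_n|^p\,dx+\rho^{p-2}\mu_0^{-1}\|u_n\|^2$, and apply Young's inequality. For $\rho$ small this gives the linear bound $\|u_n\|^2\lesssim\int_{\{|u_n|\ge\rho\}}|u_n|^p\,dx+o(1)\|u_n\|$. Inserting it above absorbs the $\lambda|u_n|_2^2$ term for $\lambda$ small (depending on $\rho$). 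This yields $\limsup_n\|u_n\|^2\lesssim1+|\beta|$ directly, with no normalization or dichotomy. The Palais--Smale versus Cerami issue also disappears, because the error term is only $o(1)\|u_n\|$. This is exactly the mechanism of Proposition~\ref{prop:positive-ps} and Lemma~\ref{lem:min-bounded} in the pure-power case.
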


\section{A new result: ground states in the pure-power case}

We now turn to the new part of the paper. We consider the pure-power
nonlinearities
\[
f(u)=|u|^{p-2}u,\qquad
g(u)=|u|^{q-2}u,
\qquad 2<q<p<2^*.
\]
These nonlinearities satisfy (F1)--(F5) and (G1)--(G3); in particular,
(F5) holds for every choice of $\rho>0$. The energy functional becomes
\begin{equation}\label{eq:J-power}
J(u)=\frac12\|u^+\|^2-\frac12\|u^-\|^2
-\frac1p\int_{\R^N}|u|^p\,dx
+\frac{\lambda}{q}\int_{\R^N}|u|^q\,dx.
\end{equation}
Its derivative is
\begin{equation}
J'(u)[\varphi]
=
\langle u^+,\varphi^+\rangle-\langle u^-,\varphi^-\rangle
-\int_{\R^N}|u|^{p-2}u\varphi\,dx
+\lambda\int_{\R^N}|u|^{q-2}u\varphi\,dx.
\end{equation}

Let
\[
\cK:=\{u\in X\setminus\{0\}:J'(u)=0\}
\]
be the set of all nontrivial critical points.

\begin{definition}
A point $u\in\cK$ is called a ground state if
\[
J(u)=\inf_{v\in\cK}J(v).
\]
\end{definition}

We prove the following theorem.

\begin{theorem}[Existence of a ground state]\label{thm:ground-state}
Assume (\ref{ass:V}). Let
\[
2<q<p<2^*
\]
and
\[
f(u)=|u|^{p-2}u,\qquad g(u)=|u|^{q-2}u.
\]
Then there exists $\lambda_*>0$ such that for every
$\lambda\in(0,\lambda_*)$ the functional \eqref{eq:J-power} possesses a
ground state. Equivalently, the corresponding strongly indefinite
Schrödinger equation admits a nontrivial solution of least energy among
all nontrivial solutions.
\end{theorem}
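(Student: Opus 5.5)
We follow the minimization-on-critical-points strategy of Jeanjean and Tanaka. By Theorem~\ref{thm:BB-existence} (the pure powers satisfy (F1)--(F5), (G1)--(G3), and (F5) holds for every $\rho$), there is $\lambda_*>0$ such that $\cK\neq\emptyset$ for $\lambda\in(0,\lambda_*)$. Set $m:=\inf_{\cK}J$. We take a minimizing sequence $(u_n)\subset\cK$ with $J(u_n)\to m$ and show that, after $\Z^N$-translations, it converges to an element of $\cK$ at level $m$.

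\textbf{Step 1: $m>-\infty$, and $\cK$ is bounded away from $0$.} For $u\in\cK$, testing $J'(u)$ with $u$ gives
\[
J(u)=J(u)-\tfrac12J'(u)[u]=\Bigl(\tfrac12-\tfrac1p\Bigr)|u|_p^p-\lambda\Bigl(\tfrac12-\tfrac1q\Bigr)|u|_q^q .
\]
The $q$-term has a negative coefficient, so this identity alone does not give $J(u)\ge0$. To bound $|u|_q^q$ we interpolate between $L^2$ and $L^p$ and use \eqref{eq:mu0}. Testing $J'(u)$ with $u^+-u^-$ gives
\[
\|u\|^2=\int(|u|^{p-2}u-\lambda|u|^{q-2}u)(u^+-u^-)\,dx .
\]
The Sobolev embedding then yields $\|u\|\ge\eta>0$ uniformly on $\cK$ for $\lambda$ small, via $\|u\|^2\lesssim\|u\|^{p}+\lambda\|u\|^{q}$. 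For the lower bound on energy, note that $(u_n)$ is a Palais--Smale sequence (indeed $J'(u_n)=0$) with $J(u_n)\le\beta$. The boundedness result (\cite[Proposition 5.2]{BB2022}) therefore gives $\sup_n\|u_n\|<\infty$, so $m$ is finite.

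\textbf{Step 2: positivity.} The key estimate we aim for is: if $(v_n)$ is a bounded Palais--Smale sequence with $\liminf\|v_n\|>0$, then $\liminf J(v_n)>0$. From the identity in Step 1 together with $|v|_q^q\le|v|_2^{2\theta}|v|_p^{p(1-\theta)}$, it suffices to show that $|v_n|_p$ stays away from zero. Testing with $v_n^+-v_n^-$ gives $\|v_n\|^2\lesssim|v_n|_p^{p-1}\|v_n\|+\lambda|v_n|_q^{q-1}\|v_n\|+o(1)$. Hence if $|v_n|_p\to0$, the term $\lambda|v_n|_q^{q-1}$ must carry all of $\|v_n\|$, which is absorbed for $\lambda$ small once $|v_n|_q$ is controlled by interpolating $|v_n|_2$ and $|v_n|_p$. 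This forces $\|v_n\|\to0$, a contradiction. Applied to the $u_n$, this gives $m\ge c_0>0$ for some $c_0$ uniform in small $\lambda$.

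\textbf{Step 3: compactness.} The sequence $(u_n)$ is bounded and does not vanish: vanishing would give $|u_n|_s\to0$, and then $\|u_n\|\to0$ by the testing identity, contradicting $\|u_n\|\ge\eta$. After translating by $k_n\in\Z^N$, which leaves $J$ and $\cK$ invariant, we get $u_n\weak u\neq0$. Weak-to-weak$^*$ continuity of $J'$ gives $u\in\cK$, so $J(u)\ge m$. Set $w_n:=u_n-u$. The Brezis--Lieb lemma and orthogonality of the splitting give
\[
J(u_n)=J(u)+J(w_n)+o(1),\qquad J'(w_n)\to0 \text{ in } X^*,
\]
the latter using $|u_n|^{r-2}u_n-|u|^{r-2}u-|w_n|^{r-2}w_n\to0$ in $L^{r'}$ for $r=p,q$. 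If $\liminf\|w_n\|>0$, Step 2 gives $\liminf J(w_n)>0$, hence $m\ge J(u)+c>m$, which is impossible. So $w_n\to0$ along a subsequence, $J(u)=m$, and $u$ is a ground state.

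\textbf{Main obstacle.} The delicate point is Step 2: the functional restricted to critical points or Palais--Smale sequences is not obviously positive, because the $\lambda|u|^q$ term enters with the wrong sign. The estimate must be made uniform, with constants independent of the sequence, using only the spectral-gap inequality \eqref{eq:mu0}, continuity of the projections $u\mapsto u^\pm$ in $L^s$, and smallness of $\lambda$. My first attempt will be to prove a lower bound $|v|_p\ge c\|v\|^{\alpha}$ along such sequences and then pick $\lambda_*$ so that $\lambda|v|_q^q\le\frac12(\frac12-\frac1p)|v|_p^p$, possibly combined with the region splitting $\{|v|<\rho\}$ and $\{|v|\ge\rho\}$ from the boundedness argument.
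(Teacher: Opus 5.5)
Your overall architecture is the paper's: $\cK\neq\emptyset$ via Theorem~\ref{thm:BB-existence}, a minimizing sequence in $\cK$, non-vanishing, $\Z^N$-translation, Brezis--Lieb splitting, and positivity of $J$ along the remainder $w_n$. Steps 1 and 3 are fine, and citing \cite[Proposition 5.2]{BB2022} to bound the minimizing sequence is a legitimate substitute for Lemma~\ref{lem:min-bounded}.

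The gap is Step 2, which is the step everything hinges on. The reduction ``it suffices to show that $|v_n|_p$ stays away from zero'' is not correct for a fixed $\lambda$. Put $a=\frac12-\frac1p$ and $b=\frac12-\frac1q$. Your identity and interpolation only give
\[
J(v_n)\ge |v_n|_p^{p(1-\theta)}\Bigl(a|v_n|_p^{p\theta}-\lambda b\,|v_n|_2^{2\theta}\Bigr)+o(1).
\]
This bound is positive only if $|v_n|_p^p\gtrsim\lambda^{1/\theta}|v_n|_2^2$. That is a lower bound on $|v_n|_p$ measured against the size of $v_n$, with constants independent of the sequence.

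Your contradiction argument ($|v_n|_p\to0\Rightarrow\|v_n\|\to0$) yields only $\liminf|v_n|_p>0$, with a sequence-dependent constant. It does not even use the smallness of $\lambda$, since interpolation alone already gives $|v_n|_q\to0$. So it cannot produce a threshold $\lambda_*$ fixed in advance. This matters because in Step 3 you apply Step 2 to $w_n$, which is not known when $\lambda$ is chosen. You flag this under ``Main obstacle'' but leave it as a plan.

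The missing estimate is \eqref{eq:norm-by-Lp}: $\|v_n\|^2\le C|v_n|_p^p+o(1)$, with $C$ independent of the sequence and of $\lambda<\lambda_1$.
\begin{enumerate}
\item Test $J'(v_n)$ with $v_n^+-v_n^-$, but use \eqref{eq:plus-minus} ($|v^+-v^-|_r\le D_r|v|_r$) instead of Sobolev. This gives $\|v_n\|^2\le D_p|v_n|_p^p+\lambda D_q|v_n|_q^q+o(1)$. Your bound $\lambda|v_n|_q^{q-1}\|v_n\|$ has the wrong homogeneity to be absorbed uniformly.
\item Lemma~\ref{lem:power-two} and the spectral gap \eqref{eq:mu0} give $|v_n|_q^q\le|v_n|_p^p+C\mu_0^{-1}\|v_n\|^2$.
\item For small $\lambda$, the term $\lambda C\mu_0^{-1}\|v_n\|^2$ is absorbed into the left side.
\end{enumerate}

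With this in hand, your interpolation yields $|v_n|_q^q\le C'|v_n|_p^p+o(1)$. After shrinking $\lambda$, this gives $J(v_n)\ge\frac a2|v_n|_p^p+o(1)$. Hence $\liminf J(v_n)\ge\frac{a\eta^2}{2C}$ whenever $\liminf\|v_n\|\ge\eta$. This is the uniform positivity needed for $w_n$, and it also gives $m>0$. No splitting into $\{|v|<\rho\}$ and $\{|v|\ge\rho\}$ is needed for pure powers.
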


We start with a simple interpolation-type estimate.

\begin{lemma}\label{lem:power-two}
Let $2<q<p$. For every $\eps>0$ there exists $C_\eps>0$ such that
\[
|t|^q\le \eps |t|^p+C_\eps |t|^2,\qquad t\in\R.
\]
Consequently,
\[
|u|_q^q\le \eps |u|_p^p+C_\eps |u|_2^2
\]
for every $u\in L^p(\R^N)\cap L^2(\R^N)$.
\end{lemma}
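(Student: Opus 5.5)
The pointwise inequality follows by splitting according to the size of $|t|$, and the integral version then follows by integrating over $\R^N$.

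Fix $\eps>0$ and consider $t\neq 0$; the case $t=0$ is trivial. On the set $\{|t|\ge T\}$, with $T>0$ to be chosen, I would write $|t|^q=|t|^{q-p}|t|^p\le T^{q-p}|t|^p$, because $q-p<0$. Choosing $T:=\eps^{-1/(p-q)}$ gives $T^{q-p}=\eps$, so $|t|^q\le\eps|t|^p$ there. On the complementary set $\{|t|<T\}$, I would write $|t|^q=|t|^{q-2}|t|^2\le T^{q-2}|t|^2$, because $q-2>0$. Setting
\[
C_\eps:=T^{q-2}=\eps^{-(q-2)/(p-q)}
\]
and combining the two regions, each of which is bounded by one of two nonnegative terms, yields $|t|^q\le\eps|t|^p+C_\eps|t|^2$ for all $t\in\R$.

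For the second statement, take $u\in L^p(\R^N)\cap L^2(\R^N)$ and apply the pointwise bound with $t=u(x)$ for a.e.\ $x$. Integrating over $\R^N$ gives $|u|_q^q\le\eps|u|_p^p+C_\eps|u|_2^2$. The right-hand side is finite, so this also shows $u\in L^q(\R^N)$. Both steps are routine.
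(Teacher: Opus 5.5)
Your proposal is correct and follows essentially the same route as the paper: split $\R$ into small and large $|t|$, bound $|t|^q$ by $|t|^2$ on the first region and by $|t|^p$ on the second, then integrate. The only difference is that you make the threshold $T=\eps^{-1/(p-q)}$ and the constant $C_\eps=\eps^{-(q-2)/(p-q)}$ explicit, whereas the paper argues qualitatively via the decay of $|t|^q/(|t|^p+|t|^2)$ at $0$ and $\infty$.
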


\begin{proof}
Since $2<q<p$, the quotient
\[
\frac{|t|^q}{|t|^p+|t|^2}
\]
tends to zero both as $t\to0$ and as $|t|\to\infty$. A standard splitting
into the regions of small and large $|t|$ gives the pointwise inequality.
Integration gives the asserted estimate for functions.
\end{proof}

Since the projections onto $X^\pm$ are continuous in $L^s(\R^N)$, for
every $s\in[2,2^*)$ there exists $D_s>0$ such that
\begin{equation}\label{eq:plus-minus}
|u^+-u^-|_s\le D_s |u|_s,\qquad u\in X.
\end{equation}

\begin{lemma}\label{lem:away-zero-critical}
There exists $\eta_0>0$, independent of $\lambda\in(0,1]$, such that every
$u\in\cK$ satisfies
\[
\|u\|\ge \eta_0.
\]
\end{lemma}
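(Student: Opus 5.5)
The plan is to test the critical point equation $J'(u)=0$ with $\varphi=u^+-u^-$. This choice makes the quadratic part produce the full norm:
\[
\|u\|^2=\|u^+\|^2+\|u^-\|^2
=\int_{\R^N}|u|^{p-2}u\,(u^+-u^-)\,dx-\lambda\int_{\R^N}|u|^{q-2}u\,(u^+-u^-)\,dx.
\]
The difficulty is that the $\lambda$-term has no sign after testing with $u^+-u^-$, because $u^+-u^-\neq u$. So both nonlinear terms must be estimated in absolute value, and this has to be done uniformly in $\lambda\in(0,1]$.

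By Hölder's inequality and \eqref{eq:plus-minus},
\[
\Bigl|\int_{\R^N}|u|^{p-2}u\,(u^+-u^-)\,dx\Bigr|\le |u|_p^{p-1}|u^+-u^-|_p\le D_p|u|_p^p .
\]
In the same way, using $\lambda\le1$,
\[
\lambda\Bigl|\int_{\R^N}|u|^{q-2}u\,(u^+-u^-)\,dx\Bigr|\le D_q|u|_q^q .
\]
This gives $\|u\|^2\le D_p|u|_p^p+D_q|u|_q^q$.

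Next, the Sobolev embedding $H^1\hookrightarrow L^s$ holds for $s\in[2,2^*]$. Moreover, $\|\cdot\|$ is equivalent to the standard $H^1$ norm under (V); this follows from the spectral gap and the boundedness of $V$, and is consistent with \eqref{eq:mu0}. Hence $|u|_s\le S_s\|u\|$ for $s=p,q$, and therefore
\[
\|u\|^2\le D_pS_p^p\|u\|^p+D_qS_q^q\|u\|^q .
\]
Since $u\neq0$, dividing by $\|u\|^2$ gives
\[
1\le C_1\|u\|^{p-2}+C_2\|u\|^{q-2},
\]
with constants independent of $\lambda$. Because $p>2$ and $q>2$, the right-hand side tends to $0$ as $\|u\|\to0$. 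It follows that $\|u\|\ge\eta_0$, where $\eta_0>0$ is the unique positive root of $C_1t^{p-2}+C_2t^{q-2}=1$.

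The only point needing care is the one above: the sign-indefinite $\lambda$-term cannot be discarded and must be absorbed as a higher-order term. This works because $q>2$. The uniformity in $\lambda$ comes from $\lambda\le1$, and the continuity of the projections in $L^q$ and $L^p$ is exactly what \eqref{eq:plus-minus} provides. Lemma~\ref{lem:power-two} is not needed here.
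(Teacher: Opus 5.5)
Your proposal is correct and follows essentially the same route as the paper: test $J'(u)=0$ with $u^+-u^-$, use H\"older and \eqref{eq:plus-minus} together with $\lambda\le1$, apply the Sobolev embedding, and divide by $\|u\|^2$. The only cosmetic difference is that you take $\eta_0$ to be the positive root of $C_1t^{p-2}+C_2t^{q-2}=1$ directly, whereas the paper argues by contradiction with a sequence $\|u_n\|\to0$.
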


\begin{proof}
Let $u\in\cK$. Testing the equation $J'(u)=0$ with
$\varphi=u^+-u^-$, we obtain
\[
\|u\|^2
=
\int_{\R^N}|u|^{p-2}u(u^+-u^-)\,dx
-\lambda\int_{\R^N}|u|^{q-2}u(u^+-u^-)\,dx.
\]
Hence, by Hölder's inequality and \eqref{eq:plus-minus},
\[
\|u\|^2
\le D_p|u|_p^p+\lambda D_q|u|_q^q.
\]
Using the continuous embeddings $X\hookrightarrow L^p(\R^N)\cap
L^q(\R^N)$ and $\lambda\le1$, we get
\[
\|u\|^2\le C(\|u\|^p+\|u\|^q),
\]
for some $C>0$. If there were a sequence $u_n\in\cK$ such that $\|u_n\|\to0$, then after
dividing by $\|u_n\|^2$ we would obtain
\[
1\le C(\|u_n\|^{p-2}+\|u_n\|^{q-2})\to0,
\]
a contradiction.
\end{proof}

The following estimate is the key point of the proof.

\begin{proposition}\label{prop:positive-ps}
There exists $\lambda_1>0$ such that, for every
$\lambda\in(0,\lambda_1)$ and every $\eta>0$, there is a constant
$c_\eta>0$ with the following property. If $(u_n)\subset X$ is bounded,
$J'(u_n)\to0$ in $X^*$, and
\[
\liminf_{n\to\infty}\|u_n\|\ge\eta,
\]
then
\[
\liminf_{n\to\infty}J(u_n)\ge c_\eta>0.
\]
\end{proposition}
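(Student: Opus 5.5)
The plan is to combine the standard identity $J(u)-\tfrac12 J'(u)[u]$ with a lower bound on $|u_n|_p^p$ obtained by testing the equation with $u_n^+-u_n^-$. Lemma~\ref{lem:power-two} and \eqref{eq:mu0} let me absorb the defocusing $q$-term into the $p$-term once $\lambda$ is small. Throughout I fix $\eps=1$ in Lemma~\ref{lem:power-two} and write $C_1$ for the resulting constant.

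\emph{Step 1 (lower bound on $|u_n|_p^p$).} Since $(u_n)$ is bounded, $\|u_n^+-u_n^-\|=\|u_n\|$ is bounded, so $J'(u_n)\to0$ gives $J'(u_n)[u_n^+-u_n^-]=o(1)$. As in the proof of Lemma~\ref{lem:away-zero-critical}, Hölder's inequality and \eqref{eq:plus-minus} then give
\[
\|u_n\|^2\le D_p|u_n|_p^p+\lambda D_q|u_n|_q^q+o(1).
\]
Lemma~\ref{lem:power-two} and \eqref{eq:mu0} give
\[
|u_n|_q^q\le |u_n|_p^p+C_1\mu_0^{-1}\|u_n\|^2 .
\]
Take $\lambda_1\le1$ so small that $\lambda_1 D_qC_1\mu_0^{-1}\le\tfrac12$. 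The $\|u_n\|^2$ term can then be absorbed into the left-hand side, and for $\lambda<\lambda_1$ we obtain
\[
\tfrac12\|u_n\|^2\le (D_p+D_q)|u_n|_p^p+o(1).
\]
Writing $A:=2(D_p+D_q)$, this reads $\|u_n\|^2\le A|u_n|_p^p+o(1)$. Together with $\liminf\|u_n\|\ge\eta$ it yields $\liminf|u_n|_p^p\ge \eta^2/A$.

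\emph{Step 2 (energy identity).} Boundedness also gives $J'(u_n)[u_n]=o(1)$. Hence
\[
J(u_n)=J(u_n)-\tfrac12J'(u_n)[u_n]+o(1)
=\Bigl(\tfrac12-\tfrac1p\Bigr)|u_n|_p^p-\lambda\Bigl(\tfrac12-\tfrac1q\Bigr)|u_n|_q^q+o(1).
\]
The second coefficient is positive since $q>2$, so the $q$-term has to be controlled. I use Lemma~\ref{lem:power-two} again and then the bound from Step 1:
\[
|u_n|_q^q\le |u_n|_p^p+C_1\mu_0^{-1}\|u_n\|^2\le \bigl(1+C_1\mu_0^{-1}A\bigr)|u_n|_p^p+o(1).
\]
Therefore
\[
J(u_n)\ge\Bigl[\Bigl(\tfrac12-\tfrac1p\Bigr)-\lambda\Bigl(\tfrac12-\tfrac1q\Bigr)\bigl(1+C_1\mu_0^{-1}A\bigr)\Bigr]|u_n|_p^p+o(1).
\]
I shrink $\lambda_1$ further so that the bracket is at least $\tfrac12\bigl(\tfrac12-\tfrac1p\bigr)$. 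The constants $D_p,D_q,C_1,\mu_0$ do not depend on $\lambda$, $\eta$ or the sequence, so $\lambda_1$ is independent of $\eta$ and of $(u_n)$, as the statement requires.

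\emph{Conclusion and main obstacle.} Combining the two steps gives
\[
\liminf_{n\to\infty} J(u_n)\ge c_\eta:=\tfrac12\Bigl(\tfrac12-\tfrac1p\Bigr)\frac{\eta^2}{A}>0.
\]
The delicate point is that $c_\eta$ must not depend on the bound $\sup_n\|u_n\|$. A naive Sobolev bound $|u_n|_q^q\lesssim\|u_n\|^q$ would make $c_\eta$ depend on that bound. This is why I control every term linearly through $|u_n|_p^p$: I use the $L^2$-coercivity \eqref{eq:mu0} for the $|u|_2^2$ part of Lemma~\ref{lem:power-two}, and the tested equation of Step 1 to return from $\|u_n\|^2$ to $|u_n|_p^p$. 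The price is that the smallness of $\lambda$ is used twice, once in Step 1 and once in Step 2.
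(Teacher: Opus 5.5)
Your proof is correct and follows the paper's argument essentially step for step. You test with $u_n^+-u_n^-$ and absorb the $q$-term via Lemma~\ref{lem:power-two} and \eqref{eq:mu0} to get $\|u_n\|^2\le A|u_n|_p^p+o(1)$; you then bound $J(u_n)-\tfrac12J'(u_n)[u_n]$ from below by a multiple of $|u_n|_p^p$ after shrinking $\lambda_1$ a second time. Your explicit constants make it clear that $\lambda_1$ does not depend on $\eta$ and that $c_\eta$ does not depend on $\sup_n\|u_n\|$. You use $\eps=1$ in both applications of the lemma, where the paper takes a small $\eps$ the second time; that smaller $\eps$ is not actually needed.
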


\begin{proof}
Let $(u_n)$ be a bounded sequence such that
\[
J'(u_n)\to0,
\qquad
\liminf_{n\to\infty}\|u_n\|\ge\eta>0.
\]
Testing $J'(u_n)\to0$ with $u_n^+-u_n^-$ gives
\[
\|u_n\|^2
\le D_p|u_n|_p^p+\lambda D_q|u_n|_q^q+o(1).
\]
By Lemma~\ref{lem:power-two}, with $\eps=1$, and by \eqref{eq:mu0},
\[
|u_n|_q^q\le |u_n|_p^p+C|u_n|_2^2
\le |u_n|_p^p+C\mu_0^{-1}\|u_n\|^2.
\]
Hence
\[
\|u_n\|^2
\le C|u_n|_p^p+\lambda C\|u_n\|^2+o(1).
\]
After fixing $\lambda_1>0$ sufficiently small, the last term can be
absorbed into the left-hand side. Thus, for a constant $C>0$ independent of
$n$ and $\lambda\in(0,\lambda_1)$,
\begin{equation}\label{eq:norm-by-Lp}
\|u_n\|^2\le C|u_n|_p^p+o(1).
\end{equation}

Set
\[
\alpha:=\frac12-\frac1p>0,
\qquad
\beta:=\frac12-\frac1q>0.
\]
For every $u\in X$,
\[
J(u)-\frac12J'(u)[u]
=
\alpha |u|_p^p-\lambda\beta |u|_q^q.
\]
Using Lemma~\ref{lem:power-two} with a fixed sufficiently small $\eps>0$,
then \eqref{eq:mu0} and \eqref{eq:norm-by-Lp}, we obtain
\[
|u_n|_q^q\le C|u_n|_p^p+o(1).
\]
After decreasing $\lambda_1$ if necessary,
\[
J(u_n)-\frac12J'(u_n)[u_n]
\ge \frac{\alpha}{2}|u_n|_p^p+o(1).
\]
Since $(u_n)$ is bounded and $J'(u_n)\to0$, we have
$J'(u_n)[u_n]=o(1)$. Moreover, \eqref{eq:norm-by-Lp} gives
\[
\liminf_{n\to\infty}|u_n|_p^p\ge \frac{\eta^2}{C}.
\]
Consequently,
\[
\liminf_{n\to\infty}J(u_n)
\ge \frac{\alpha\eta^2}{2C}=:c_\eta>0.
\]
\end{proof}

By Theorem \ref{thm:BB-existence}, there is $\lambda_{\mathrm{ex}}>0$
such that $\cK\neq\emptyset$ for every
$\lambda\in(0,\lambda_{\mathrm{ex}})$; here we use the fact that, in
the pure-power case, condition (F5) holds for every $\rho>0$. Set
\[
\lambda_*:=\min\{1,\lambda_1,\lambda_{\mathrm{ex}}\},
\]
and fix $\lambda\in(0,\lambda_*)$ for the remainder of this section.
Define
\[
m:=\inf_{u\in\cK}J(u).
\]

\begin{corollary}\label{cor:critical-positive}
For $\lambda>0$ sufficiently small there exists $c_0>0$ such that
\[
J(u)\ge c_0,
\qquad u\in\cK.
\]
\end{corollary}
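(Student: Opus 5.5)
The plan is to apply Proposition~\ref{prop:positive-ps} to constant sequences, using Lemma~\ref{lem:away-zero-critical} to supply the lower bound $\eta$. Fix $\lambda\in(0,\lambda_*)$; since $\lambda_*\le\min\{1,\lambda_1\}$, both Lemma~\ref{lem:away-zero-critical} and Proposition~\ref{prop:positive-ps} apply at this $\lambda$.

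By Lemma~\ref{lem:away-zero-critical}, every $u\in\cK$ satisfies $\|u\|\ge\eta_0$, and $\eta_0>0$ does not depend on $u$ or on $\lambda\in(0,1]$. Set $\eta:=\eta_0$, and let $c_0:=c_{\eta_0}>0$ be the constant that Proposition~\ref{prop:positive-ps} provides for this $\eta$. This constant depends only on $\eta_0$ and on the structural constants in the proof: $C$, $\alpha$ and $\mu_0$. It does not depend on any particular sequence.

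Now take an arbitrary $u\in\cK$ and the constant sequence $u_n:=u$. It is bounded, and $J'(u_n)=J'(u)=0$, so $J'(u_n)\to0$ in $X^*$ trivially. Also $\liminf_n\|u_n\|=\|u\|\ge\eta_0$. Proposition~\ref{prop:positive-ps} therefore gives
\[
J(u)=\liminf_{n\to\infty}J(u_n)\ge c_{\eta_0}=c_0 .
\]
Since $u\in\cK$ was arbitrary, the claim follows. In particular $m\ge c_0>0$, and $m<\infty$ because $\cK\neq\emptyset$.

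There is no real obstacle here. The only point to check is that $c_\eta$ in Proposition~\ref{prop:positive-ps} is uniform over all admissible sequences with the given $\eta$. The proof shows this explicitly: $c_\eta=\alpha\eta^2/(2C)$, with $C$ independent of $n$, of the sequence, and of $\lambda\in(0,\lambda_1)$.

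Alternatively, one can argue directly without sequences. Since $J'(u)=0$, the identity $J(u)=J(u)-\tfrac12J'(u)[u]=\alpha|u|_p^p-\lambda\beta|u|_q^q$ holds exactly. The same estimates as in Proposition~\ref{prop:positive-ps}, with no $o(1)$ terms, then give $J(u)\ge\tfrac{\alpha}{2}|u|_p^p\ge\tfrac{\alpha}{2C}\|u\|^2\ge\tfrac{\alpha\eta_0^2}{2C}$.
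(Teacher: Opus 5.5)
Your proof is correct and is exactly the paper's argument: apply Proposition~\ref{prop:positive-ps} to the constant sequence $u_n\equiv u\in\cK$ with $\eta=\eta_0$ from Lemma~\ref{lem:away-zero-critical}, and note that $c_0=c_{\eta_0}$ does not depend on $u$. Your sequence-free variant via $J(u)=J(u)-\tfrac12J'(u)[u]$ is also valid; it is the same estimate written out directly, and the paper uses it later in Lemma~\ref{lem:min-bounded}.
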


\begin{proof}
Apply Proposition~\ref{prop:positive-ps} to the constant sequence
$u_n\equiv u\in\cK$ with $\eta=\eta_0$ from
Lemma~\ref{lem:away-zero-critical}. The resulting constant
$c_0=c_{\eta_0}$ is independent of $u$.
\end{proof}

By Corollary~\ref{cor:critical-positive},
\[
0<c_0\le m<\infty.
\]
Choose a minimizing sequence $(u_n)\subset\cK$ such that
\[
J(u_n)\to m.
\]

\begin{lemma}\label{lem:min-bounded}
The sequence $(u_n)$ is bounded in $X$.
\end{lemma}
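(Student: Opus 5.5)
Since each $u_n$ is an exact critical point, we can use the identity
\[
J(u_n)=J(u_n)-\tfrac12J'(u_n)[u_n]=\alpha|u_n|_p^p-\lambda\beta|u_n|_q^q,
\]
with $\alpha=\frac12-\frac1p$ and $\beta=\frac12-\frac1q$ as in the proof of Proposition~\ref{prop:positive-ps}. We also use the norm identity obtained by testing $J'(u_n)=0$ with $u_n^+-u_n^-$. The goal is to bound $|u_n|_p^p$ in terms of $J(u_n)$, and then $\|u_n\|$ in terms of $|u_n|_p$.

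\textbf{Step 1: bound $|u_n|_q^q$ in terms of $|u_n|_p^p$.} Exactly as in the first part of the proof of Proposition~\ref{prop:positive-ps}, but now without any $o(1)$ error, testing with $u_n^+-u_n^-$ gives
\[
\|u_n\|^2\le D_p|u_n|_p^p+\lambda D_q|u_n|_q^q .
\]
Lemma~\ref{lem:power-two} with $\eps=1$ and \eqref{eq:mu0} give $|u_n|_q^q\le |u_n|_p^p+C\mu_0^{-1}\|u_n\|^2$. Since $\lambda<\lambda_1$, the $\lambda$-term absorbs into the left-hand side, yielding
\[
\|u_n\|^2\le C|u_n|_p^p .
\]
Then Lemma~\ref{lem:power-two} with a small fixed $\eps$ and \eqref{eq:mu0} give $|u_n|_q^q\le (\eps+C_\eps\mu_0^{-1}C)|u_n|_p^p=:C'|u_n|_p^p$. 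Here $C'$ does not depend on $\lambda$.

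\textbf{Step 2: bound $|u_n|_p^p$.} Substituting Step 1 into the identity gives
\[
J(u_n)\ge(\alpha-\lambda\beta C')|u_n|_p^p\ge\tfrac{\alpha}{2}|u_n|_p^p .
\]
This is the inequality already obtained in the proof of Proposition~\ref{prop:positive-ps}, valid after the same decrease of $\lambda_1$. Since $J(u_n)\to m<\infty$, we get $\sup_n|u_n|_p^p\le \frac{2}{\alpha}\sup_nJ(u_n)<\infty$.

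\textbf{Step 3: conclude.} Combining Step 2 with $\|u_n\|^2\le C|u_n|_p^p$ from Step 1 shows that $(u_n)$ is bounded in $X$.

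The only delicate point is the bookkeeping of constants. The smallness threshold for $\lambda$ must be the one already fixed through $\lambda_1$, and hence through $\lambda_*$, independently of the sequence. I would state explicitly that Proposition~\ref{prop:positive-ps}, read with zero error terms, yields both displayed inequalities for every critical point. The estimates then hold uniformly in $n$ for every fixed $\lambda\in(0,\lambda_*)$.
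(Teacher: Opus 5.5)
Your proposal is correct and follows the paper's proof essentially verbatim: the error-free version of the first part of Proposition~\ref{prop:positive-ps} gives $\|u_n\|^2\le C|u_n|_p^p$, the second part gives $J(u_n)\ge\frac{\alpha}{2}|u_n|_p^p$, and $J(u_n)\to m<\infty$ then bounds $|u_n|_p$ and hence $\|u_n\|$. Your extra bookkeeping, checking that the constants are uniform in $\lambda\in(0,\lambda_1)$ so that the threshold is the one already built into $\lambda_*$, is exactly what the paper leaves implicit.
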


\begin{proof}
Since $u_n\in\cK$, we have $J'(u_n)=0$. Repeating the first part of the
proof of Proposition~\ref{prop:positive-ps}, now without the $o(1)$ term,
yields
\[
\|u_n\|^2\le C|u_n|_p^p.
\]
Likewise, the second part of that proof gives
\[
J(u_n)
=J(u_n)-\frac12J'(u_n)[u_n]
\ge\frac{\alpha}{2}|u_n|_p^p,
\]
where $\alpha=\frac12-\frac1p>0$. Since $J(u_n)\to m$, the sequence
$(|u_n|_p)$ is bounded, and the first estimate implies that $(u_n)$ is
bounded in $X$.
\end{proof}

\begin{lemma}\label{lem:no-vanishing}
The sequence $(u_n)$ does not vanish. More precisely, there exist
$R>0$, $\delta>0$, and a sequence $(z_n)\subset\Z^{N}$ such that, up to
a subsequence,
\[
\int_{B(z_n,R)}|u_n|^2\,dx\ge\delta.
\]
\end{lemma}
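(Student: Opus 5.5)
We argue by contradiction, using Lions' vanishing lemma together with the lower bounds already obtained in the proof of Lemma~\ref{lem:min-bounded}. Suppose that for every $R>0$
\[
\lim_{n\to\infty}\sup_{z\in\Z^N}\int_{B(z,R)}|u_n|^2\,dx=0
\]
along a subsequence. Balls centred at lattice points suffice, since every ball $B(y,R)$ is contained in $B(z,R+\sqrt N)$ for some $z\in\Z^N$. By Lemma~\ref{lem:min-bounded} the sequence $(u_n)$ is bounded in $X=H^1(\R^N)$. Lions' concentration--compactness lemma then gives
\[
|u_n|_s\to0 \quad\text{for every } 2<s<2^*,
\]
and in particular $|u_n|_p\to0$.

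The contradiction comes from the fact that critical points cannot be small. Since $u_n\in\cK$, Lemma~\ref{lem:away-zero-critical} gives $\|u_n\|\ge\eta_0$. On the other hand, the first part of the proof of Lemma~\ref{lem:min-bounded} (equivalently \eqref{eq:norm-by-Lp} with no $o(1)$ term, valid because $\lambda<\lambda_1$) gives
\[
\eta_0^2\le\|u_n\|^2\le C|u_n|_p^p\to0,
\]
which is impossible. Alternatively, the bound $J(u_n)\ge\frac{\alpha}{2}|u_n|_p^p$ together with $|u_n|_p\to0$ would force $m=0$, contradicting $m\ge c_0>0$ from Corollary~\ref{cor:critical-positive}. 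Either route works.

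Hence vanishing fails. There exist $R>0$, $\delta>0$ and points $y_n\in\R^N$ such that, along a subsequence, $\int_{B(y_n,R)}|u_n|^2\,dx\ge\delta$. Choose $z_n\in\Z^N$ with $|z_n-y_n|\le\sqrt N$ and replace $R$ by $R+\sqrt N$; this gives the stated inequality with lattice centres.

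The only substantive step is the use of Lions' lemma, which needs boundedness in $H^1$; this is supplied by Lemma~\ref{lem:min-bounded}. The key point is that the absorption estimate $\|u\|^2\le C|u|_p^p$ holds on $\cK$, uniformly for $\lambda<\lambda_*$. This is what rules out the defocusing $q$-term masking vanishing: by Lemma~\ref{lem:power-two} and \eqref{eq:mu0}, that term is controlled by $|u|_p^p$ plus a small multiple of $\|u\|^2$.
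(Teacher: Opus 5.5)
Your main argument is correct and is essentially the paper's proof: boundedness from Lemma~\ref{lem:min-bounded}, Lions' lemma, and the bound $\|u_n\|\ge\eta_0$ from Lemma~\ref{lem:away-zero-critical}. The one difference is that the paper uses the unabsorbed estimate $\|u_n\|^2\le D_p|u_n|_p^p+\lambda D_q|u_n|_q^q$; since $2<q<2^*$, Lions' lemma also gives $|u_n|_q\to0$, so the absorption and the smallness of $\lambda$ that you emphasize are not needed at this step.

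Your \emph{alternative} route is misjustified as written, because the lower bound $J(u_n)\ge\frac{\alpha}{2}|u_n|_p^p\to0$ cannot force $m=0$. What works is the identity $J(u_n)=\alpha|u_n|_p^p-\lambda\beta|u_n|_q^q\le\alpha|u_n|_p^p\to0$, which gives $m\le0$ and contradicts Corollary~\ref{cor:critical-positive}.
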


\begin{proof}
By Lemma~\ref{lem:min-bounded}, the sequence $(u_n)$ is bounded. Suppose by contradiction that $(u_n)$ vanishes, namely for every $R>0$,
\[
\sup_{z\in\R^{N}}\int_{B(z,R)}|u_n|^2\,dx\to0.
\]
By Lions' concentration--compactness argument (\cite{Lions}),
\[
|u_n|_p\to0,\qquad |u_n|_q\to0.
\]
Since $J'(u_n)=0$, testing the equation with $u_n^+-u_n^-$ gives
\[
\|u_n\|^2
\le D_p|u_n|_p^p+\lambda D_q|u_n|_q^q\to0.
\]
This contradicts Lemma~\ref{lem:away-zero-critical}. Hence there are
$R>0$, $\delta>0$, and points $y_n\in\R^N$ such that
\[
\int_{B(y_n,R)}|u_n|^2\,dx\ge\delta.
\]
Choosing $z_n\in\Z^N$ with $|z_n-y_n|\le \sqrt N/2$ and enlarging $R$
if necessary yields the asserted conclusion.
\end{proof}

By Lemma~\ref{lem:no-vanishing}, after translations we may assume that
\[
v_n(x):=u_n(x+z_n)
\]
satisfies
\[
\int_{B(0,R)}|v_n|^2\,dx\ge\delta.
\]
The functional is invariant under $\Z^{N}$-translations, hence
\[
J(v_n)=J(u_n),\qquad J'(v_n)=0.
\]
The sequence $(v_n)$ is bounded, so, up to a subsequence,
\[
v_n\weak u_*\quad\text{in }X.
\]
By the local compactness of the embedding into $L^2_{\mathrm{loc}}(\R^N)$ and
the lower bound on the local $L^2$-mass,
\[
u_*\ne0.
\]
Moreover, the weak-to-weak$^*$ continuity of $J'$ gives
\[
J'(u_*)=0.
\]
Thus $u_*\in\cK$, and therefore
\[
J(u_*)\ge m.
\]

To prove the reverse inequality, we use a Brezis--Lieb splitting.

\begin{lemma}[Splitting lemma]\label{lem:splitting}
Let
\[
w_n:=v_n-u_*.
\]
Then, up to a subsequence,
\[
J(v_n)=J(u_*)+J(w_n)+o(1),
\]
and
\[
J'(w_n)\to0\quad\text{in }X^*.
\]
\end{lemma}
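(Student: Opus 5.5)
The plan is to split $J$ into its quadratic part and the two power terms and treat each separately. Throughout, $v_n\weak u_*$ in $X$, so $w_n\weak 0$ in $X$. We may also assume $v_n\to u_*$ in $L^s_{\mathrm{loc}}(\R^N)$ for $2\le s<2^*$ and $v_n\to u_*$ a.e., after passing to a subsequence.

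\emph{Quadratic part.} The projections $u\mapsto u^\pm$ are orthogonal, and $w_n^\pm\weak 0$. Expanding $\|v_n^\pm\|^2=\|u_*^\pm+w_n^\pm\|^2$ therefore gives
\[
\|v_n^\pm\|^2=\|u_*^\pm\|^2+\|w_n^\pm\|^2+o(1).
\]
\emph{Power terms.} Since $(v_n)$ is bounded in $L^p$ and $L^q$ and converges a.e., the Brezis--Lieb lemma gives
\[
|v_n|_s^s=|u_*|_s^s+|w_n|_s^s+o(1),\qquad s\in\{p,q\}.
\]
Adding the quadratic identities and the two power identities with the weights $\frac12$, $-\frac12$, $-\frac1p$ and $\frac{\lambda}{q}$ yields $J(v_n)=J(u_*)+J(w_n)+o(1)$.

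\emph{Derivative.} Fix $\varphi\in X$ with $\|\varphi\|\le1$. The linear part of $J'$ splits exactly, since
\[
\langle v_n^+,\varphi^+\rangle-\langle v_n^-,\varphi^-\rangle
=\bigl(\langle u_*^+,\varphi^+\rangle-\langle u_*^-,\varphi^-\rangle\bigr)
+\bigl(\langle w_n^+,\varphi^+\rangle-\langle w_n^-,\varphi^-\rangle\bigr).
\]
We also have $J'(v_n)=0$ and $J'(u_*)=0$. Hence
\[
J'(w_n)[\varphi]=-\int_{\R^N}\bigl(h_s(v_n)-h_s(u_*)-h_s(w_n)\bigr)\varphi\,dx
\]
for the combination $h=h_p-\lambda h_q$, where $h_s(t)=|t|^{s-2}t$. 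So the whole problem reduces to the nonlinear Brezis--Lieb statement
\[
\bigl|h_s(v_n)-h_s(u_*)-h_s(w_n)\bigr|_{s'}\to0,\qquad s'=\frac{s}{s-1},
\]
for $s=p$ and $s=q$. Once this holds, Hölder's inequality and the embedding $X\hookrightarrow L^s$ give the convergence uniformly over $\|\varphi\|\le1$, that is, $J'(w_n)\to0$ in $X^*$.

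This dual-norm convergence is the main obstacle, because mere weak convergence of the remainder is not enough. I would prove it by the standard splitting argument.

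\emph{Local part.} On $B(0,\rho)$, strong $L^s_{\mathrm{loc}}$ convergence gives $w_n\to0$ in $L^s(B(0,\rho))$. Continuity of the Nemytskii operator $h_s:L^s\to L^{s'}$ then makes the expression small there.

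\emph{Outside part.} On $\R^N\setminus B(0,\rho)$, the tail $|u_*|_{L^s(|x|>\rho)}$ is small for large $\rho$. The pointwise inequality
\[
\bigl|h_s(a+b)-h_s(b)\bigr|\le C\bigl(|a|^{s-1}+|a||b|^{s-2}\bigr)
\]
with $a=u_*$, $b=w_n$, combined with Hölder's inequality and the uniform bound on $|w_n|_s$, makes that part small uniformly in $n$. The term $h_s(u_*)$ is controlled by the same tail bound.

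Choosing $\rho$ first and then $n$ large closes the argument. A diagonal subsequence handles the passage to subsequences.
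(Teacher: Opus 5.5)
Your proposal is correct and follows the paper's proof: orthogonal splitting of the quadratic part, Brezis--Lieb for $|\cdot|_p^p$ and $|\cdot|_q^q$, and reduction of $J'(w_n)\to0$ to the dual Brezis--Lieb convergence in $L^{s/(s-1)}(\R^N)$, which you additionally justify by the standard local/tail argument that the paper only asserts. One harmless slip: since $J'(v_n)=J'(u_*)=0$, the identity should read $J'(w_n)[\varphi]=+\int_{\R^N}\bigl(h(v_n)-h(u_*)-h(w_n)\bigr)\varphi\,dx$ with $h=h_p-\lambda h_q$, which does not affect the convergence argument.
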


\begin{proof}
Since $v_n\weak u_*$ in $X$, the orthogonality of the splitting gives
\[
\|v_n^\pm\|^2=\|u_*^\pm\|^2+\|w_n^\pm\|^2+o(1).
\]
Moreover, after passing to a subsequence, $v_n(x)\to u_*(x)$ almost
everywhere, and the sequence is bounded in both $L^p(\R^N)$ and $L^q(\R^N)$. By the Brezis--Lieb lemma \cite{BrezisLieb},
\[
|v_n|_p^p=|u_*|_p^p+|w_n|_p^p+o(1),
\qquad
|v_n|_q^q=|u_*|_q^q+|w_n|_q^q+o(1).
\]
Substitution into the definition of $J$ yields the energy splitting.

For the derivative, the quadratic part splits linearly, while the
power-type nonlinearities satisfy the dual Brezis--Lieb splitting
\[
|v_n|^{r-2}v_n-|w_n|^{r-2}w_n-|u_*|^{r-2}u_*
\to0
\quad\text{in }L^{r/(r-1)}(\R^N)
\]
for $r\in\{p,q\}$. Since $J'(v_n)=0$ and $J'(u_*)=0$, we conclude that
\[
J'(w_n)\to0.
\]
\end{proof}

\begin{proof}[Proof of Theorem~\ref{thm:ground-state}]
Let $\lambda\in(0,\lambda_*)$. The preceding construction provides a
sequence $(v_n)\subset\cK$ such
that
\[
J(v_n)\to m,\qquad v_n\weak u_*\ne0.
\]
Moreover $u_*\in\cK$ and $J(u_*)\ge m$.

Let $w_n=v_n-u_*$. By Lemma~\ref{lem:splitting},
\[
J(v_n)=J(u_*)+J(w_n)+o(1).
\]
Hence
\[
J(w_n)=m-J(u_*)+o(1).
\]
Since $J(u_*)\ge m$, we get
\[
\limsup_{n\to\infty}J(w_n)\le0.
\]
At the same time,
\[
J'(w_n)\to0.
\]

We claim that $w_n\to0$ strongly in $X$. Otherwise, after passing to a
subsequence, there is $\eta>0$ such that
\[
\|w_n\|\ge\eta
\qquad\text{for every }n.
\]
The sequence $(w_n)$ is bounded and $J'(w_n)\to0$, so Proposition~\ref{prop:positive-ps} gives
\[
\liminf_{n\to\infty}J(w_n)>0.
\]
This contradicts $\limsup_{n\to\infty} J(w_n)\le0$. Therefore $w_n\to0$ in $X$.

Consequently,
\[
m=\lim_{n\to\infty}J(v_n)=J(u_*).
\]
Since $u_*\in\cK$, the point $u_*$ is a ground state.
\end{proof}

\section*{Acknowledgments}
Bartosz Bieganowski was partly supported by the National Science Centre, Poland (Grant No. 2022/47/D/ST1/00487).

\end{document}